\theoremstyle{plain}
\newtheorem{thm}{Theorem}[section]
\newtheorem*{thmintro}{Theorem}
\newtheorem{cor}[thm]{Corollary} 
\newtheorem{lem}[thm]{Lemma} 
\newtheorem{prop}[thm]{Proposition} 
\theoremstyle{definition}
\newtheorem{defn}[thm]{Definition}
\theoremstyle{remark}
\newtheorem{rem}[thm]{Remark}
\theoremstyle{remark}
\theoremstyle{remark}
\theoremstyle{remark}
\theoremstyle{definition}
\theoremstyle{definition}
\theoremstyle{plain}
\theoremstyle{definition}
\theoremstyle{remark}
\theoremstyle{remark}
\theoremstyle{definition}
\theoremstyle{remark}
  \newtheorem*{acknowledgement*}{Acknowledgement}
\newcommand{\FF}{{\mathbb F}}
\newcommand{\C}{{\mathbb C}}
\newcommand{\N}{{\mathbb N}}
\newcommand{\M}{{\mathbb M}}
\newcommand{\B}{{\mathbb B}}
\newcommand{\F}{{\mathbb F}}
\newcommand{\G}{\Gamma}
\newcommand{\e}{\varepsilon}
\newcommand{\p}{\varphi}
\newcommand{\hh}{{\mathcal H}}
\newcommand{\hk}{{\mathcal K}}
\newcommand{\id}{\mathrm{id}}
\newcommand{\ip}[1]{\langle#1\rangle}
\def\freeprod{\font\bigsymbolsfont=cmsy10 scaled \magstep3
 \setbox0=\hbox{\bigsymbolsfont\char'003 }\mathord{\lower1pt\box0}}\relax\ignorespaces
\newcommand{\Hawaii}{Hawai\kern.05em`\kern.05em\relax i}
\newcommand{\Manoa}{M\=anoa}
\begin{document}

\title[New C$^*$-completions of discrete groups]{New C$^*$-completions
  of discrete groups\\ and related spaces} 

\author{Nathanial P.\ Brown and Erik Guentner}

\address{Department of Mathematics, Penn State University, State
College, PA 16802, USA}

\email{nbrown@math.psu.edu} 

\address{Department of Mathematics, University of \Hawaii\ at \Manoa,
  Honolulu, HI 96822}  

\email{erik@math.hawaii.edu} 

\thanks{The first named author was partially supported by
  DMS-0856197. The second named author was partially supported
  by DMS-0349367.} 

\begin{abstract}  
  Let $\Gamma$ be a discrete group.  To every ideal in
  $\ell^{\infty}(\G)$ we associate a C$^*$-algebra completion of the
  group ring that encapsulates the unitary representations with matrix
  coefficients belonging to the ideal.  The general framework we
  develop unifies some classical results and leads to new insights.
  For example, we give the first C$^*$-algebraic characterization of
  a-T-menability; a new characterization of property (T); new examples of ``exotic" quantum groups;
  and, after extending our construction to transformation groupoids, we
  improve and simplify a recent result of Douglas and Nowak \cite{DN}.
\end{abstract}

\maketitle

\section{Introduction}

Since their introduction by von Neumann, amenable groups have played
an important role in many areas of mathematics.  They have been
studied from a variety of perspectives and in many different contexts,
and a vast literature is now devoted to them.  More recently, the
concept of an amenable action of a (non-amenable) group was introduced
by Zimmer, and subsequently developed by many authors.  An elementary
connection between these theories is the fact that every action of an
amenable group is an amenable action.  Less obvious, but equally
well-known, is that if a group acts amenably on a compact space fixing
a probability measure then the group itself is amenable.

This last fact is the launching point of a recent paper by Douglas and
Nowak \cite{DN}, in which, among other things, they introduce
conditions on an amenable action sufficient to guarantee that the
group acting is a-T-menable -- in other words, that it admits a
metrically proper, affine isometric action on a Hilbert space.  An
amenable group is a-T-menable, so that one may imagine hypotheses
involving existence of a quasi-invariant measure together with
conditions on the associated Radon-Nikodym cocycle.  Precisely,
suppose a discrete group $\G$ acts amenably on the compact Hausdorff
topological space $X$, and that $\mu$ is a probability measure on $X$
which is quasi-invariant for the action.  Define upper and lower
envelopes of the Radon-Nikodym cocycle by
\begin{equation*}
  \overline{\rho}(x) = \sup_{s\in G} \frac{ds^*\mu}{d\mu}(x), 
     \quad\text{and}\quad
  \underline{\rho}(x) = \inf_{s\in G} \frac{ds^*\mu}{d\mu}(x); 
\end{equation*}
here, $s^*\mu$ is the translate of the measure $\mu$ by the group
element $s$, and ${ds^*\mu}/{d\mu}$ is the Radon-Nikodym derivative. 
Douglas and Nowak show that if $\overline{\rho}$ is integrable, or if
$\underline{\rho}$ is nonzero, then the group $\G$ is a-T-menable.
They ask whether amenability of $\G$ follows from either of these
conditions.  In this note, we shall prove that this is indeed the
case.  See Corollary~\ref{cor:DN} and surrounding discussion.

Our initial result lead us to the following question: if one wishes to
conclude a-T-menability of $\G$, what are the appropriate hypotheses?
To answer this question, we introduce appropriate completions of the
group ring of $\G$, and of the convolution algebra $C_c(X\rtimes G)$
in the case of an action.  Precisely, for every algebraic ideal in
$\ell^\infty(\G)$ we associate a completion -- for $\ell^\infty(\G)$
we recover the full C$^*$-algebra, for $c_c(\G)$ we recover the
reduced C$^*$-algebra, and for $c_0(\G)$ we obtain new C$^*$-algebras
well-adapted to the study of a-T-menability and a-T-menable actions.
Our results in this context are summarized:

\begin{thmintro}
  Let $\G$ be a discrete group acting on a compact Hausdorff
  topological space $X$.  Let $C^*_{c_0}(\G)$ and 
  $C^*_{c_0}(X\rtimes \G)$ be the completions with respect to the ideal
  $c_0(\G)$.  We have:
  \begin{enumerate}
  \item $\G$ is a-T-menable if and only if $C^*(\G)=C^*_{c_0}(\G)$;
  \item if the action of\/ $\G$ on $X$ is a-T-menable then
            $C^*(X\rtimes \G) = C^*_{c_0}(X\rtimes \G)$.
  \end{enumerate}
  Further, under the hypotheses of Douglas and Nowak, $\G$ is
  a-T-menable if and only if its action on $X$ is a-T-menable.
\end{thmintro}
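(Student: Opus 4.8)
The plan is to first isolate a single structural lemma about the completions $C^*_J(\G)$ and then read off all three assertions from it. For an algebraic ideal $J \trianglelefteq \ell^\infty(\G)$, I would call a unitary representation a $C_J$-representation if all of its matrix coefficients lie in $J$, let $\|\cdot\|_J$ be the resulting seminorm on $\C[\G]$, and let $C^*_J(\G)$ be its completion; since the $C_J$-representations form a subclass of all representations, $\|\cdot\|_J \le \|\cdot\|_{\mathrm{full}}$, so $C^*_J(\G)$ is automatically a quotient of $C^*(\G)$. The lemma I would establish is that $C^*_J(\G)=C^*(\G)$ if and only if the trivial representation $1_\G$ is weakly contained in the $C_J$-representations. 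The forward direction is immediate; for the converse the key device is a tensoring trick: if $\sigma$ is a $C_J$-representation and $\pi$ is arbitrary, then $\pi\otimes\sigma$ is again a $C_J$-representation, because each of its matrix coefficients is the pointwise product of a bounded function coming from $\pi$ and a function in $J$ coming from $\sigma$, and $J$ is an ideal. Thus $1_\G \prec C_J$ forces $\pi = \pi\otimes 1_\G \prec \pi\otimes(C_J) \subseteq C_J$ for every $\pi$, whence $\|\cdot\|_{\mathrm{full}} \le \|\cdot\|_J$. The ideal hypothesis on $J$ is used in exactly this step.

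For assertion (1) I would specialize to $J=c_0(\G)$. A $C_{c_0}$-representation is one with matrix coefficients vanishing at infinity, and its diagonal coefficients are normalized positive-definite functions lying in $c_0(\G)$. By the lemma, $C^*(\G)=C^*_{c_0}(\G)$ holds exactly when $1_\G$ is weakly contained in the $C_{c_0}$-representations, i.e.\ when the constant function $1$ is approximable uniformly on finite subsets of $\G$ by normalized positive-definite functions in $c_0(\G)$. This is precisely one of the standard equivalent formulations of the Haagerup property, which in turn is equivalent to the existence of a metrically proper affine isometric action on Hilbert space; so both implications of (1) follow.

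For assertion (2) I would extend the construction to the transformation groupoid $X\rtimes\G$, taking as ideal the functions on $X\times\G$ that vanish at infinity in the group variable and declaring a covariant representation of $C_c(X\rtimes\G)$ to be a $C_{c_0}$-representation when its groupoid matrix coefficients lie in this ideal. The tensoring trick goes through verbatim---the ideal is again absorbing under multiplication by bounded groupoid matrix coefficients---so $C^*(X\rtimes\G)=C^*_{c_0}(X\rtimes\G)$ as soon as the unit representation is weakly contained in the groupoid $C_{c_0}$-representations. By definition an a-T-menable action supplies a net of positive-type functions on the groupoid that are $c_0$ along the group direction and converge pointwise to $1$, and these are exactly the required approximants; this proves the stated implication. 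Only one direction is asserted because the reverse compression is not available for a general action.

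Finally, for the ``Further'' clause I would prove the two implications separately. That a-T-menability of $\G$ implies a-T-menability of the action is soft: a metrically proper negative-type function $\psi$ on $\G$ pulls back to $\tilde\psi(x,s)=\psi(s)$, a proper negative-type function on $X\rtimes\G$. For the converse, under the Douglas--Nowak hypotheses I would combine (2) with a compression from the crossed product to the group using the quasi-invariant measure $\mu$: integrating a groupoid positive-type function $h_n$ over $X$ produces $\phi_n(s)=\int_X h_n(x,s)\,d\mu(x)$, a positive-definite function on $\G$, and I would use the envelope conditions on the Radon--Nikodym cocycle $ds^*\mu/d\mu$ to show that these $\phi_n$ remain in $c_0(\G)$, stay normalized, and converge to $1$; assertion (1) then delivers a-T-menability of $\G$. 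I expect this compression to be the main obstacle: controlling the integrated functions requires the cocycle to be dominated either in mean (when $\overline{\rho}$ is integrable) or uniformly from below (when $\underline{\rho}$ is nonzero), which is exactly why quasi-invariance forces the two separate hypotheses, and it is the same mechanism that will be pushed further to obtain amenability in Corollary~\ref{cor:DN}.
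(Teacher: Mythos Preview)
Your argument for part~(1) is correct and is a pleasant alternative to the paper's approach.  The paper proves the general criterion $C^*(\G)=C^*_D(\G)\Leftrightarrow$ ``there exist positive definite $h_n\in D$ with $h_n\to 1$'' by a Schur-multiplier argument: given $h_n$, the multipliers $m_{h_n}$ push an arbitrary state into one whose GNS representation is a $D$-representation.  Your tensor-absorption argument ($1_\G\prec\sigma$ with $\sigma$ a $D$-representation forces $\pi\cong\pi\otimes 1_\G\prec\pi\otimes\sigma$ for every $\pi$) is equivalent in strength and arguably more transparent; both hinge on the ideal property of $D$ in exactly the same place.

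The plan breaks down at part~(2).  The tensoring trick does \emph{not} go through verbatim for the crossed product, because there is no ``unit representation'' of $C_c(X\rtimes\G)$ to play the role of $1_\G$, and two covariant representations of $(C(X),\G)$ do not tensor to another covariant representation of the same system (you would land in a representation of $C(X)\otimes C(X)$).  You can tensor a covariant pair $(\rho,U)$ with a \emph{group} representation $V$ to obtain $(\rho\otimes 1,\,U\otimes V)$, but then you need $1_\G\prec V$ with $V$ a $c_0$-representation of $\G$---which is a-T-menability of $\G$, not of the action.  The paper handles this by staying with Schur multipliers: a positive definite $h\colon X\times\G\to\C$ with $s\mapsto\|h(\,\cdot\,,s)\|_\infty$ in $D$ induces a completely positive map $m_h$ on $C^*(X\rtimes\G)$, and for any state $\varphi$ the GNS representation of $\varphi\circ m_h$ is a $D$-representation.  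This is the step you are missing.

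Your plan for the ``Further'' clause is also off track.  Setting $\phi_n(s)=\int_X h_n(x,s)\,d\mu(x)$ does not in general produce a positive definite function on $\G$: positive-definiteness of $h_n$ gives positivity of the matrices $[h_n(s_i.x,\,s_is_j^{-1})]$, and integrating these against $\mu$ yields $\phi_n(s_is_j^{-1})$ only when $\mu$ is \emph{invariant}; under mere quasi-invariance the change of variables introduces the cocycle and destroys positivity.  The paper instead uses the Douglas--Nowak hypotheses only to produce a nonzero $\G$-fixed vector in $L^2(X,\mu)$ (namely $\overline\rho^{1/2}$ or $\underline\rho^{1/2}$), so that the covariant representation on $L^2(X,\mu)$ weakly contains $1_\G$.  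Once the action is a-T-menable, part~(2) gives $C^*(X\rtimes\G)=C^*_{c_0}(X\rtimes\G)$, and the composite $C^*_{c_0}(\G)\to C^*_{c_0}(X\rtimes\G)\to\B(L^2(X,\mu))$ is then a $c_0$-representation of $\G$ weakly containing the trivial one---which is a-T-menability of $\G$.  No integration of the $h_n$ is needed.
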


Apart from this theorem, and ancillary related results, we develop
some general aspects of our ideal completions.  We study when an ideal
completion recovers the full or reduced group C$^*$-algebra and give
examples when it is neither -- this gives rise to `exotic' compact
quantum groups.  We recover a standard characterization of amenability
-- equality of the full and reduced group C$^*$-algebras -- we obtain
the characterization of a-T-menability stated above, and we
characterize Property (T) in terms of ideal completions.

\begin{acknowledgement*} 
The first author thanks the math department at the University of
\Hawaii\ for embodying the aloha spirit during the sabbatical year when
this work was carried out.  He also thanks Yehuda
Shalom and Rufus Willett for helpful remarks and
suggestions, respectively.   Both authors thank Jesse Peterson for
sharing his insights.
\end{acknowledgement*}

\section{Ideals and C$^*$-completions}  

Throughout, $\G$ will denote a (countable) discrete group and $D
\triangleleft \ell^{\infty}(\G)$ will be an \emph{algebraic} (not
necessarily norm-closed) two-sided ideal.  If 
$\pi\colon \G \to \B(\hh)$ is a unitary representation and vectors
$\xi$, $\eta \in\hh$ are given, the $\ell^{\infty}$-function
\begin{equation*}
  \pi_{\xi,\eta}(s) := \ip{\pi_s(\xi), \eta}
\end{equation*}
is a \emph{matrix coefficient \textup{(}function\textup{)}} of $\pi$.
The map associating to a pair of vectors their matrix coefficient
function is sesquilinear; concretely, given finitely many vectors
$v_i$, $w_j\in \hh$, if we set $\xi = \sum \alpha_i v_i$ and 
$\eta = \sum_j \beta_j w_j$ then we have
\begin{equation*}
  \pi_{\xi,\eta} = \sum_{i,j} \alpha_i \bar{\beta}_j \pi_{v_i,w_j}.
\end{equation*}
In particular, if a linear subspace of $\ell^{\infty}(\G)$ contains
the $\pi_{v_i,w_j}$ then it contains $\pi_{\xi,\eta}$ as well.

\begin{defn} 
\label{defn:Drep} 
Let $D\triangleleft \ell^{\infty}(\G)$ be an ideal.  A unitary
representation $\pi\colon \G \to \B(\hh)$ is a \emph{D-representation}
if there exists a dense linear subspace $\hh_0 \subset \hh$ such that
$\pi_{\xi,\eta} \in D$ for all $\xi$, $\eta \in \hh_0$.  
\end{defn}

\begin{defn} 
\label{defn:grpdecay} 
Let $D\triangleleft \ell^{\infty}(\G)$ be an ideal.  Define a
C$^*$-norm on the group ring $\C [\G]$ by 
\begin{equation*}
  \| x \|_D := \sup \{\, \|\pi(x)\| : 
      \pi \textrm{ is a $D$-representation}\};
\end{equation*}
let $C^*_D(\Gamma)$ denote the completion of $\C [\G]$ with
respect to $\| \cdot \|_D$.  
\end{defn} 

We shall refer to the C$^*$-algebra $C^*_D(\Gamma)$, and its
generalizations defined below, as \emph{ideal completions}; these will
be our primary objects of study.  

Evidently, $C^*_D(\G)$ has the universal property that every
$D$-representation of $\G$ extends uniquely to $C^*_D(\G)$.  We shall
refer to such representations as \emph{$D$-representations} of
$C^*_D(\G)$.  

By virtue of its universal property, the full group
C$^*$-algebra of $\G$ surjects onto every ideal completion.  For
some $D$ the ideal completion $C^*_D(\G)$ does not contain the group
ring -- it may even be the zero $C^*$-algebra! -- so is not strictly
speaking a `completion'.  However, if $D$ contains the ideal $c_c(\G)$
of finitely supported functions then $C^*_D(\G)$ is indeed a
completion of the group ring -- this follows because the regular
representation of $\G$, being a $c_c$-representation, extends to
$C^*_D(\G)$.

\begin{rem}
  If $D$ is a \emph{closed} ideal, then \emph{every} matrix
  coefficient of a $D$-representation belongs to $D$ (that is, not
  just those associated to the dense subspace $\hh_0$).  For example,
  this is the case for the ideal $c_0(\G)$ of functions vanishing at
  infinity. 
\end{rem}

\begin{rem}[Tensor products]
The tensor product of a $D$-representation and an arbitrary
representation is again a $D$-representation.  Suppose
$\pi\colon\G \to B(\hh)$ is a $D$-representation and 
$\sigma\colon\G \to B(\hk)$ is arbitrary.   For $v_i\in\hh_0$ and
$w_i\in\hk$ we have
\begin{equation*}
  (\pi \otimes \sigma)_{v_1 \otimes w_1, {v_2} \otimes {w_2}} = 
   \pi_{v_1,{v_2}} \sigma_{w_1,{w_2}} \in D,
\end{equation*}
since $D$ is an ideal; further, such simple tensors have dense span in
$\hh\otimes\hk$. 
\end{rem}

\begin{rem}[Direct sums] 
\label{rem:sum} 
An arbitrary direct sum of $D$-representations is again a
$D$-representation.  This follows since in the definition we only
require a dense subspace.
\end{rem} 

As a consequence, $C^*_D(\G)$ has a \emph{faithful}
$D$-representation. Indeed, for each element $x \in C^*_D(\G)$ there
is a $D$-representation $\pi$ such that $\pi(x) \neq 0$.  Taking
direct sums one easily constructs a faithful $D$-representation of
$C^*_D(\G)$.

\begin{defn} 
\label{defn:D} 
An ideal $D \triangleleft \ell^{\infty}(\G)$ is 
\emph{translation invariant} if it is invariant under both the left
and right translation actions of $\Gamma$ on $\ell^{\infty}(\G)$.   
\end{defn}

Every nonzero, translation invariant ideal in $\ell^\infty(\G)$ contains the
ideal $c_c(\G)$.  It follows
that the ideal completion with respect to a translation invariant
ideal surjects onto the reduced group C$^*$-algebra of $\G$.


\begin{rem}[Cyclic representations]  
\label{rem:cyclic} 
Let $D$ be a translation invariant ideal.  If $v$ is a cyclic vector
for a representation $\pi\colon \G\to\B(\hh)$ and $\pi_{v,v}\in D$,
then $\pi$ is a $D$-representation.  Indeed, a computation confirms
that if $\xi = \pi_{g_1}(v)$ and $\eta = \pi_{g_2}(v)$, then 
\begin{equation*}
  \pi_{\xi,\eta}(s) = \pi_{v,v}(g_2^{-1} s g_1),
\end{equation*}
so that also $\pi_{\xi,\eta} \in D$.  Setting 
$\hh_0 = \textrm{span}\{ \pi_s(v) : s \in \G \}$ we see that $\hh_0$
is dense in $\hh$ and that the matrix coefficients coming from
vectors in $\hh_0$ belong to $D$. 
\end{rem}

\begin{prop}  
  Let $\phi:\Gamma_1\to\Gamma_2$ be a group homomorphism and let
  $D_i\triangleleft\ell^\infty(\Gamma_i)$ be ideals satisfying the
  following condition:  if $f\in D_2$ then $f\circ\phi\in D_1$.  Then
  $\phi$ extends to a \emph{C}$^*$-homomorphism
  $C^*_{D_1}(\Gamma_1)\to C^*_{D_2}(\Gamma_2)$.
\end{prop}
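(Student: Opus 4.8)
The plan is to push $\phi$ through to the group rings and then check that the resulting $*$-homomorphism is contractive for the relevant $D$-norms, so that it extends by continuity. First I would record that $\phi$ induces a unital $*$-homomorphism $\C[\Gamma_1]\to\C[\Gamma_2]$ in the obvious way, namely $s\mapsto\phi(s)$ extended linearly; I will continue to denote this map by $\phi$. It sends the canonical unitaries to unitaries and respects the involution and convolution product, so this step is purely formal.

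The crux is the observation that pulling back along $\phi$ carries $D_2$-representations to $D_1$-representations. Let $\pi\colon\Gamma_2\to\B(\hh)$ be a $D_2$-representation, and choose a dense subspace $\hh_0\subset\hh$ with $\pi_{\xi,\eta}\in D_2$ for all $\xi,\eta\in\hh_0$. The composition $\pi\circ\phi\colon\Gamma_1\to\B(\hh)$ is again a unitary representation, and for $\xi,\eta\in\hh_0$ and $s\in\Gamma_1$ one computes
\[
  (\pi\circ\phi)_{\xi,\eta}(s)=\ip{\pi_{\phi(s)}(\xi),\eta}=\pi_{\xi,\eta}(\phi(s)),
\]
that is, $(\pi\circ\phi)_{\xi,\eta}=\pi_{\xi,\eta}\circ\phi$. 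By the hypothesis relating $D_1$ and $D_2$, this function lies in $D_1$; since $\hh_0$ is still dense in $\hh$, the same dense subspace witnesses that $\pi\circ\phi$ is a $D_1$-representation.

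It follows that for every $x\in\C[\Gamma_1]$ and every $D_2$-representation $\pi$ of $\Gamma_2$ we have $\|\pi(\phi(x))\|=\|(\pi\circ\phi)(x)\|\le\|x\|_{D_1}$, since $\pi\circ\phi$ is one of the $D_1$-representations appearing in the supremum defining $\|\cdot\|_{D_1}$. Taking the supremum over all such $\pi$ gives $\|\phi(x)\|_{D_2}\le\|x\|_{D_1}$, so $\phi$ is a contractive $*$-homomorphism from $(\C[\Gamma_1],\|\cdot\|_{D_1})$ to $(\C[\Gamma_2],\|\cdot\|_{D_2})$; it therefore extends, uniquely by density of the group ring, to a $*$-homomorphism $C^*_{D_1}(\Gamma_1)\to C^*_{D_2}(\Gamma_2)$. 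I do not anticipate a genuine obstacle here: the only point requiring any care is the elementary matrix-coefficient identity $(\pi\circ\phi)_{\xi,\eta}=\pi_{\xi,\eta}\circ\phi$, and the hypothesis on the ideals is tailored precisely so that this pulled-back coefficient lands in $D_1$.
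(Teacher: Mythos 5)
Your proof is correct and rests on exactly the same key observation as the paper's: that precomposition with $\phi$ turns a $D_2$-representation of $\Gamma_2$ into a $D_1$-representation of $\Gamma_1$, via the identity $(\pi\circ\phi)_{\xi,\eta}=\pi_{\xi,\eta}\circ\phi$. The paper packages this by applying the observation to a single faithful $D_2$-representation of $C^*_{D_2}(\Gamma_2)$ rather than by writing out the norm inequality $\|\phi(x)\|_{D_2}\le\|x\|_{D_1}$, but the two formulations are equivalent and yours is, if anything, more explicit.
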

\begin{proof}
  Apply the following simple observation to a faithful
  $D_2$-representation of $\Gamma_2$:  
  under the stated hypotheses, if $\pi$ is a $D_2$-representation of
  $\Gamma_2$, then $\pi\circ\phi$ is a $D_1$-representation of
  $\Gamma_1$; in particular it extends to $C^*_{D_1}(\Gamma_1)$.
\end{proof}

\begin{cor} 
The following assertions hold.  
\label{prop:funct} 
\begin{enumerate} 
\item
\label{prop:funct1} 
Suppose $D_i \triangleleft \ell^{\infty}(\G)$ are ideals and 
$D_2 \subset D_1$; there is a quotient map 
$C^*_{D_1}(\G) \to C^*_{D_2}(\G)$
\textup{(}extending the identity map on the group ring\textup{)}.

\item
\label{prop:funct3} 
Suppose $\Lambda \subset \G$ is a normal subgroup, 
$D_2 \triangleleft \ell^{\infty}(\G / \Lambda)$ is an ideal and 
$D_1 \triangleleft \ell^{\infty}(\G)$ is an ideal containing the image 
of $D_2$ under the inclusion 
$\ell^{\infty}(\G /\Lambda) \subset \ell^{\infty}(\G)$;
there is a surjection 
\textup{(}extending the homomorphism $\phi$\textup{)} 
\begin{equation*}
  C^*_{D_1}(\G) \to C^*_{D_2}(\G / \Lambda).
\end{equation*}
\end{enumerate} 
\end{cor}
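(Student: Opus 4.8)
The plan is to deduce both statements directly from the preceding Proposition, applied to a suitable homomorphism $\phi$, and then to upgrade the resulting $*$-homomorphism to a quotient map by a routine density argument. Recall that the image of a $*$-homomorphism of $C^*$-algebras is closed; so any $*$-homomorphism out of $C^*_{D_1}$ whose restriction to (the image of) $\C[\G]$ has dense range in the target is automatically surjective. This observation is what converts ``$*$-homomorphism'' into ``quotient map'' in each case.

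For \eqref{prop:funct1} I would take $\phi = \id\colon \G\to\G$. Then the hypothesis of the Proposition, namely $f\in D_2 \Rightarrow f\circ\phi\in D_1$, reads $f\in D_2 \Rightarrow f\in D_1$, which is exactly the assumption $D_2\subset D_1$. The Proposition therefore yields a $*$-homomorphism $C^*_{D_1}(\G)\to C^*_{D_2}(\G)$ extending the identity on $\C[\G]$. Its image is closed and contains the image of $\C[\G]$, which is dense in $C^*_{D_2}(\G)$ by definition of the completion, so the map is onto.

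For \eqref{prop:funct3} I would take $\phi\colon\G\to\G/\Lambda$ to be the quotient homomorphism. The key point to note is that the inclusion $\ell^\infty(\G/\Lambda)\subset\ell^\infty(\G)$ appearing in the statement is precisely the pullback $f\mapsto f\circ\phi$, identifying a bounded function on $\G/\Lambda$ with the $\Lambda$-bi-invariant bounded function it induces on $\G$. Hence the assumption that $D_1$ contains the image of $D_2$ under this inclusion is literally the hypothesis $f\in D_2 \Rightarrow f\circ\phi\in D_1$ of the Proposition. Applying the Proposition gives a $*$-homomorphism $C^*_{D_1}(\G)\to C^*_{D_2}(\G/\Lambda)$ extending $\phi$ on the group rings; since $\phi$ is onto we have $\phi(\C[\G]) = \C[\G/\Lambda]$, which is dense in $C^*_{D_2}(\G/\Lambda)$, so the map is a surjection as claimed.

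There is essentially no obstacle here --- all the real content sits in the Proposition, and the ideals being merely algebraic causes no trouble since no closure hypotheses are used. The only step that warrants a moment's care is the identification in \eqref{prop:funct3} of the canonical inclusion $\ell^\infty(\G/\Lambda)\subset\ell^\infty(\G)$ with composition along the quotient map; once that is recorded, both parts are immediate.
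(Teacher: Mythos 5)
Your proposal is correct and follows the same route as the paper, which simply notes that both statements are immediate from the preceding Proposition (applied, as you do, to the identity map and to the quotient map $\G\to\G/\Lambda$); you have merely written out the details, including the standard closed-range argument for surjectivity. The paper additionally observes that part (1) is equivalent to the norm inequality $\|\cdot\|_{D_1}\le\|\cdot\|_{D_2}$, but this is an alternative remark rather than a different proof.
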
 

\begin{proof}
  Both statements are immediate from the proposition.  The first is
  also equivalent to the inequality, 
  $\| \cdot \|_{D_1} \leq \| \cdot \|_{D_2}$, which is immediate from
  the definitions. 
\end{proof}

We close this introductory section by looking at several basic
examples.
%
%
Our first example is trivial, since both algebras in question
satisfy the same universal property.  

\begin{prop} 
\label{prop:infty}  For every discrete group $\Gamma$, the completion
with respect to the ideal $\ell^\infty(\Gamma)$ is the universal
\textup{(}or full\textup{)} group \emph{C}$^*$-algebra: 
$C^*_{\ell^\infty}(\Gamma) = C^*(\G)$. \qed
\end{prop}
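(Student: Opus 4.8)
The plan is to show that every unitary representation of $\Gamma$ is an $\ell^\infty(\Gamma)$-representation, so that the two norms coincide on $\C[\Gamma]$. First I would recall the definitions: $\|x\|_{\ell^\infty}$ is the supremum of $\|\pi(x)\|$ over all $\ell^\infty$-representations, while $\|x\|_{C^*(\Gamma)}$ (the full norm) is the supremum over \emph{all} unitary representations. Since an $\ell^\infty$-representation is in particular a unitary representation, the inequality $\|x\|_{\ell^\infty}\le\|x\|_{C^*(\Gamma)}$ is immediate. For the reverse inequality it suffices to observe that an arbitrary unitary representation $\pi\colon\Gamma\to\B(\hh)$ is automatically an $\ell^\infty(\Gamma)$-representation: take $\hh_0=\hh$, and note that for any $\xi,\eta\in\hh$ the matrix coefficient $s\mapsto\ip{\pi_s(\xi),\eta}$ is bounded by $\|\xi\|\,\|\eta\|$, hence lies in $\ell^\infty(\Gamma)$. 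Thus the two suprema are taken over exactly the same family of representations, and the norms agree.

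The point can be phrased even more cleanly in terms of universal properties, as the paper suggests: $C^*(\Gamma)$ is characterized by the property that every unitary representation of $\Gamma$ extends uniquely to it, while $C^*_{\ell^\infty}(\Gamma)$ has the analogous property for $\ell^\infty$-representations. Since the preceding observation shows these two classes of representations coincide, the two C$^*$-algebras satisfy the same universal property and are therefore canonically isomorphic via the identity on $\C[\Gamma]$.

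There is essentially no obstacle here; the only thing to be careful about is the (trivial) verification that a bounded function on $\Gamma$ lies in $\ell^\infty(\Gamma)$, i.e.\ that every matrix coefficient of a unitary representation is bounded, which follows from Cauchy--Schwarz together with $\|\pi_s\|=1$. This is why the statement is labelled ``trivial'' in the text.
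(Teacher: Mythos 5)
Your proposal is correct and matches the paper's reasoning exactly: the paper dismisses this as trivial precisely because every unitary representation is an $\ell^\infty$-representation (matrix coefficients are bounded by Cauchy--Schwarz), so $C^*_{\ell^\infty}(\Gamma)$ and $C^*(\Gamma)$ satisfy the same universal property. Your write-up simply spells out the details the paper leaves implicit.
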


\begin{prop} 
\label{prop:2} 
For every discrete group $\Gamma$ and every $p \in [1,2]$ the
completion with respect to the ideal $\ell^p(\G)$ is the reduced
group \emph{C}$^*$-algebra:  $C^*_{\ell^p}(\Gamma) = C^*_r(\G)$.
\end{prop}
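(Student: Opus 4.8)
The plan is to show both inclusions of C$^*$-norms: $\|\cdot\|_{\ell^p} \leq \|\cdot\|_r$ and $\|\cdot\|_r \leq \|\cdot\|_{\ell^p}$. The second inequality is immediate: the regular representation $\lambda$ has matrix coefficients $\lambda_{\delta_e,\delta_e} = \delta_e \in c_c(\G) \subset \ell^p(\G)$, and since $\delta_e$ is a cyclic vector and $\ell^p(\G)$ is translation invariant, Remark~\ref{rem:cyclic} shows $\lambda$ is an $\ell^p$-representation; hence $\lambda$ extends to $C^*_{\ell^p}(\G)$, giving $\|x\|_r \leq \|x\|_{\ell^p}$ for all $x \in \C[\G]$.

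The substance is the reverse inequality, for which it suffices to show that every $\ell^p$-representation $\pi$ (with $p \in [1,2]$) is weakly contained in the regular representation. The key classical input is the Cowling--Haagerup--Howe criterion (or its ancestor in work of Godement): if a positive-definite function $\varphi$ on $\G$ lies in $\ell^2(\G)$, then $\varphi$ is a matrix coefficient of the regular representation; more generally, a positive-definite function in $\ell^{2k}(\G)$ for some integer $k$ gives a representation weakly contained in $\lambda$. First I would reduce to the cyclic case: by Remark~\ref{rem:sum} and the fact that $C^*_{\ell^p}(\G)$ has a faithful $\ell^p$-representation, it is enough to treat a cyclic $\ell^p$-representation $\pi$ with cyclic unit vector $v$, so $\varphi := \pi_{v,v} \in \ell^p(\G) \subset \ell^2(\G)$ (using $p \leq 2$). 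Then $\varphi$ is positive-definite and in $\ell^2$, so by the criterion above the GNS representation $\pi = \pi_\varphi$ is weakly contained in $\lambda$, whence $\|\pi(x)\| \leq \|\lambda(x)\| = \|x\|_r$.

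A clean way to package the $\ell^2$ step without quoting the criterion as a black box: write $\varphi = \sum_s \varphi(s)\delta_s$ and observe that the positive-definite function $\varphi$ satisfies $\varphi = \tilde\psi * \psi$ for an appropriate $\psi \in \ell^2(\G)$ — concretely, the GNS construction realizes $\pi$ on a subspace of $\ell^2(\G)$ via $\pi_s = \lambda_s$ restricted there, once one knows $v$ corresponds to an $\ell^2$-vector. I would make this precise by noting that the map $\C[\G] \to \hh$, $x \mapsto \pi(x)v$, extends to a bounded operator $\ell^2(\G) \to \hh$ intertwining $\lambda$ and $\pi$ (boundedness is exactly the statement $\|\pi(x)v\|^2 = \langle \pi(x^*x)v,v\rangle = (\varphi * \widetilde{?})$ being controlled, which one checks using $\varphi \in \ell^2 \subseteq \ell^\infty$), and this intertwiner has dense range since $v$ is cyclic — so $\pi$ is a subrepresentation of an amplification of $\lambda$, giving the norm bound.

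The main obstacle is verifying rigorously that $\varphi \in \ell^2(\G)$ forces weak containment in $\lambda$; this is standard but requires the correct form of the argument (the square-integrability of the positive-definite function, not merely of individual coefficients). Once that lemma is in hand, the case $p \in [1,2]$ follows for free since $\ell^p(\G) \subseteq \ell^2(\G)$ for $p \leq 2$, and combined with Corollary~\ref{prop:funct}\eqref{prop:funct1} (monotonicity of the completions in the ideal) one even gets the chain $C^*(\G) \twoheadrightarrow C^*_{\ell^2}(\G) \twoheadrightarrow C^*_{\ell^p}(\G) \twoheadrightarrow C^*_r(\G)$ with the outer two maps between $C^*_{\ell^p}(\G)$ and $C^*_r(\G)$ mutually inverse.
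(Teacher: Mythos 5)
Your proof is correct and follows essentially the same route as the paper: reduce to a cyclic $\ell^p$-representation by restricting a faithful one, note that the cyclic matrix coefficient lies in $\ell^p(\G)\subset\ell^2(\G)$, and invoke the Cowling--Haagerup--Howe criterion to get weak containment in the regular representation (the easy inequality $\|\cdot\|_r\le\|\cdot\|_{\ell^p}$ is handled in the paper by the prior observation that nonzero translation invariant ideals contain $c_c(\G)$). Only your optional ``clean packaging'' of the $\ell^2$ step is shaky --- boundedness of the proposed intertwiner $\ell^2(\G)\to\hh$ does not follow from $\varphi\in\ell^\infty$ alone --- but you rightly flag that the genuine content is the classical square-integrability lemma, which the paper likewise cites as a black box.
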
 

\begin{proof} 
This follows from the Cowling-Haagerup-Howe Theorem (cf.\ \cite{CHH}):
if $\pi \colon \G \to B(\hh)$ has a cyclic vector $v\in \hh$ and
$\pi_{v,v} \in \ell^2(\G)$, then $\pi$ is weakly contained in the
regular representation.\footnote{Actually, for the
  Cowling-Haagerup-Howe Theorem it suffices to have
  $\pi_{v,v} \in \ell^{2+\e}(\G)$ for all $\e >0$.  Thus, the
  proposition generalizes to the ideal 
  $D := \cap_{\e > 0} \ell^{2+\e}(\G)$, with exactly the same proof.}
Indeed, fix a nonzero $x  \in C^*_{\ell^p}(\Gamma)$.  We can find a cyclic
$\ell^p(\Gamma)$-representation $\pi$ such that $\pi(x) \neq 0$ -- 
simply restrict a faithful $\ell^p(\Gamma)$-representation to an
appropriate cyclic subspace.   Since  
$\pi$ is weakly contained in the regular representation,
$x$ cannot be in the kernel of the map 
$C^*_{\ell^p}(\Gamma) \to C^*_r(\G)$.  
\end{proof} 

It follows from functoriality (part (\ref{prop:funct1}) of
Corollary~\ref{prop:funct}) that if a translation invariant ideal $D$
is contained in $\ell^p(\Gamma)$ for some $p \in [1,2]$ then
$C^*_D(\G) = C^*_r(\G)$.  This applies, in particular, to the ideal of
finitely supported functions.  In contrast, the ideals $\ell^p(\G)$
for finite $p$ give rise to the universal group C$^*$-algebra only if
$\Gamma$ is amenable.

\begin{prop} 
\label{prop:amenable} 
If there exists  $p \in [1,\infty)$ for which
$C^*(\G) = C^*_{\ell^p}(\G)$, then $\G$ is amenable.   
\end{prop}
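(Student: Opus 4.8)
The plan is to argue contrapositively: assuming $C^*(\G) = C^*_{\ell^p}(\G)$ for some finite $p$, I will produce an invariant mean on $\ell^\infty(\G)$, hence show $\G$ is amenable. The key is that the trivial representation $1_\G$ is a $D$-representation for $D = \ell^p(\G)$ for \emph{no} nonamenable $\G$ a priori — rather, the point is that the hypothesis forces the trivial representation (which always extends to $C^*(\G)$) to extend to $C^*_{\ell^p}(\G)$, and then, by the universal property, $1_\G$ must be weakly contained in the $\ell^p$-representations. The standard route from here is to combine this with the fact (already recorded in the excerpt, via Proposition~\ref{prop:2} and functoriality) that $\ell^p$-representations for $p \in [1,2]$ are weakly contained in the regular representation. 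So the first reduction is: it suffices to treat $p > 2$, since for $p \le 2$ the hypothesis would give $C^*(\G) = C^*_r(\G)$, i.e.\ amenability by the classical theorem.

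For $p > 2$, here is the approach. Since $C^*(\G) = C^*_{\ell^p}(\G)$, the norms $\|\cdot\|$ and $\|\cdot\|_{\ell^p}$ coincide on $\C[\G]$; in particular, for the trivial representation $1_\G$ (an honest representation of $C^*(\G) = C^*_{\ell^p}(\G)$) and any $x = \sum_s c_s s \in \C[\G]$ we have $|\sum_s c_s| = \|1_\G(x)\| \le \|x\|_{\ell^p} = \sup\{\|\pi(x)\| : \pi \text{ an } \ell^p\text{-rep}\}$. The natural candidate $\ell^p$-representation to exploit is built from the left regular representation twisted into $\ell^p$: for each integer $n \ge 1$, the representation $\lambda$ on $\ell^2(\G)$ is not an $\ell^p$-representation when $\G$ is nonamenable, but one considers instead representations on $\ell^{p'}$-type spaces or tensor powers $\lambda^{\otimes k}$ whose matrix coefficients are $k$-fold products of $\ell^2$-functions, hence lie in $\ell^{2/k} \subset \ell^p$ once $k$ is large enough that $2/k \le p$ — wait, that goes the wrong way; rather one uses that a product of $\ell^2$ functions lies in $\ell^1$, so $\lambda \otimes \lambda$ already has matrix coefficients in $\ell^1 \subset \ell^p$. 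The cleaner formulation: for any $p$, $\lambda^{\otimes 2}$ is an $\ell^1$-representation, hence an $\ell^p$-representation; and $\lambda^{\otimes 2} \cong \lambda \otimes \lambda$ is a multiple of $\lambda$. This shows $\lambda$ is weakly contained in the $\ell^p$-representations, which is already automatic. The real content must therefore go the other direction: I need that $1_\G$ is weakly contained in $\bigoplus\{\pi : \pi \text{ an } \ell^p\text{-rep}\}$, and I want to upgrade \emph{that} to weak containment of $1_\G$ in $\lambda$.

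The mechanism for the upgrade is the Cowling–Haagerup–Howe / Kesten-type estimate: if $1_\G \prec \pi$ where $\pi$ has cyclic matrix coefficients in $\ell^p(\G)$, then one can extract, for every $\e > 0$ and finite $F \subset \G$, a unit vector $\xi$ with $\pi_{\xi,\xi}$ close to $1$ on $F$ \emph{and} $\pi_{\xi,\xi} \in \ell^p$; then an interpolation argument ($\ell^p$-coefficients of a unitary rep that are nearly invariant) forces $\pi_{\xi,\xi} \in \ell^{2+\e}$ effectively near those almost-invariant vectors, putting us in the range where Cowling–Haagerup–Howe applies and gives $1_\G \prec \lambda$, i.e.\ amenability. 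The main obstacle I anticipate is precisely making this interpolation rigorous: an individual $\ell^p$-vector need not have $\ell^{2+\e}$ coefficients, so one must use almost-invariance to show the coefficient function is close to constant on large sets, contradicting $\ell^p$-summability unless $\G$ is amenable — essentially a Følner-type argument run through the coefficient function. I expect the bookkeeping around "how almost-invariant" versus "how summable" to be the delicate part; everything else (the functoriality reductions, the $p \le 2$ case, the tensor power observations) is routine given the results already in the excerpt.
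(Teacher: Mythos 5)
There is a genuine gap at the decisive step. Your reductions are fine: for $p\le 2$ the statement follows from Proposition~\ref{prop:2} plus the classical theorem, and for general $p$ the hypothesis does give $1_\G\prec\pi$ for a faithful $\ell^p$-representation $\pi$ (the paper gets this via Glimm's lemma after arranging that $\pi(C^*(\G))$ contains no compacts), hence almost-invariant unit vectors $v_n$ which, after perturbing into the dense subspace $\hh_0$, satisfy $\pi_{v_n,v_n}\in\ell^p(\G)$ and $\pi_{v_n,v_n}\to 1$ pointwise. But your proposed mechanism for finishing --- an ``interpolation'' by which almost-invariance forces the coefficient function into $\ell^{2+\e}$ so that Cowling--Haagerup--Howe applies --- does not work and cannot be made to work: almost-invariance of a vector says nothing about the decay rate of its coefficient function, and for an amenable group one can exhibit almost-invariant vectors whose coefficients lie in $\ell^p$ but in no $\ell^q$ with $q<p$. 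You correctly flag this as the delicate point, but no argument is supplied, and the step as described would fail.

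The missing idea is the Schur-power (equivalently, tensor-power) trick, which is exactly how the paper closes the argument in the remark following the proposition. If $h$ is positive definite and $h\in\ell^p(\G)$, then for any integer $k$ the pointwise power $h^k$ is again positive definite (Schur product theorem, i.e.\ $h^k$ is a coefficient of $\pi^{\otimes k}$) and lies in $\ell^{p/k}$, hence in $\ell^1(\G)\subset\ell^2(\G)$ once $k\ge p$. Applying this to $h_n=\pi_{v_n,v_n}$ gives positive definite functions in $\ell^1(\G)$ still converging pointwise to $1$; since $\ell^1(\G)\subset C^*_r(\G)$ one approximates these by finitely supported positive definite functions $f_n^* * f_n$, which is amenability (alternatively, a positive definite $\ell^2$ function is a coefficient of $\lambda$, so $1_\G\prec\lambda$). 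Ironically, you wrote down precisely this tensor-power mechanism --- products of coefficients improving summability --- but applied it to $\lambda$, where it yields nothing, instead of to the representation $\pi$ carrying the almost-invariant vectors, where $\pi^{\otimes k}\succ 1_\G^{\otimes k}=1_\G$ and $\pi^{\otimes k}$ is an $\ell^{p/k}$-representation; that one redirection repairs the proof and makes it agree with the paper's.
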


In the proof, and at a number of places below, we shall use the notion
of a positive definite function: recall that $h\colon \G \to \C$ is
\emph{positive definite} if for every $s_1, \ldots, s_n \in \G$, the
matrix $[h(s_i s_j^{-1})]_{i,j} \in \M_n(\C)$ is positive
(semidefinite).

\begin{proof}  
If $C^*(\G) = C^*_{\ell^p}(\G)$, then $C^*(\G)$ admits a
\emph{faithful} $\ell^p(\Gamma)$-representation $\pi$ and, taking an 
infinite direct sum if necessary, we may assume $\pi(C^*(\G))$
contains no compact operators.  In this case, Glimm's lemma implies
that $\pi$ weakly contains the trivial representation.  Thus, let
$v_n$ be unit vectors such that $\| \pi_s(v_n) - v_n \| \to 0$ for all 
$s \in \G$.  Approximating the $v_n$'s with vectors having associated
matrix coefficients in $\ell^p(\Gamma)$, we may assume 
$\pi_{v_n, v_n} \in \ell^p(\Gamma)$ for all $n \in \N$.  Since 
$\pi_{v_n, v_n}$ are positive definite functions tending pointwise to
one, we conclude that $\G$ is amenable.
\end{proof}

\begin{rem}
  In the previous proof we have used the following elementary fact: if
  there exist positive definite functions $h_n \in \ell^p(\G)$ for
  which $h_n\to 1$ pointwise, then $\G$ is amenable.  Lacking a
  reference, we provide the following argument.  For $k$ larger than
  $p$ the functions $h_n^k$ are positive definite, converge pointwise
  to one, and belong to $\ell^1(\G)\subset C^*_r(\G)$.  To get
  finitely supported functions with similar properties, consider
  $f_n\in\C(\G)$ which approximate the square roots of the $h_n^k$ in
  the norm of $C^*_r(\G)$, so that $h_n^k$ is approximated by the
  finitely supported positive definite function $f_n^**f_n$.
\end{rem}

\section{Positive definite functions and the Haagerup property} 

Though very simple, the proof of Proposition \ref{prop:amenable}
suggests a general result.  We begin with a lemma isolating the role
of translation invariance.

\begin{lem} 
\label{lem:GNSinvariance}  
Suppose $D\triangleleft \ell^\infty(\G)$ is a translation invariant ideal
and $h \in D$ is positive definite.  The GNS representation
corresponding to $h$ is a $D$-representation.    
\end{lem}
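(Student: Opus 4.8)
The plan is to use the explicit formula for the matrix coefficients of the GNS representation and then invoke translation invariance of $D$, exactly as in Remark~\ref{rem:cyclic}. Recall that the GNS construction associated to a positive definite function $h$ produces a Hilbert space $\hh$ with a cyclic vector $v$ satisfying $\pi_{v,v}(s) = \ip{\pi_s(v),v} = h(s)$; concretely, one completes $\C[\G]$ (modulo the null space of the form $\ip{a,b} = \sum a(s)\overline{b(t)} h(t^{-1}s)$) and lets $\G$ act by left translation, with $v$ the image of the identity $\delta_e$. So by hypothesis $\pi_{v,v} = h \in D$.

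First I would record the key computation: for $g_1, g_2 \in \G$, setting $\xi = \pi_{g_1}(v)$ and $\eta = \pi_{g_2}(v)$, one has
\begin{equation*}
  \pi_{\xi,\eta}(s) = \ip{\pi_s \pi_{g_1}(v), \pi_{g_2}(v)}
                    = \ip{\pi_{g_2^{-1} s g_1}(v), v}
                    = \pi_{v,v}(g_2^{-1} s g_1) = h(g_2^{-1} s g_1).
\end{equation*}
Next I would observe that the function $s \mapsto h(g_2^{-1} s g_1)$ is obtained from $h$ by a left translation (by $g_2$) followed by a right translation (by $g_1$); since $D$ is translation invariant on both sides, this function again lies in $D$. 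By the sesquilinearity remark in the text (matrix coefficients of linear combinations of vectors are the corresponding linear combinations of matrix coefficients, and a linear subspace containing the $\pi_{v_i,w_j}$ contains all $\pi_{\xi,\eta}$), it follows that $\pi_{\xi,\eta} \in D$ for all $\xi, \eta$ in the linear span $\hh_0 := \mathrm{span}\{\pi_s(v) : s \in \G\}$. Finally, since $v$ is cyclic, $\hh_0$ is dense in $\hh$, so $\hh_0$ witnesses that $\pi$ is a $D$-representation in the sense of Definition~\ref{defn:Drep}.

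There is no real obstacle here — the lemma is essentially Remark~\ref{rem:cyclic} applied to the cyclic vector $v$ of the GNS representation, with the single added input that $\pi_{v,v} = h$ by construction of the GNS representation. The only point requiring any care is bookkeeping with the order of left versus right translation in the formula $h(g_2^{-1} s g_1)$, to make sure that both a left and a right translation of $D$ are invoked (which is precisely why two-sided translation invariance, rather than one-sided, is the right hypothesis); everything else is routine.
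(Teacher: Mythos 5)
Your proof is correct and follows exactly the paper's route: the paper's own proof is a one-line appeal to Remark~\ref{rem:cyclic} together with the observation that the canonical cyclic vector $v$ of the GNS representation satisfies $\pi_{v,v}=h\in D$, and your argument simply unpacks that remark's computation $\pi_{\xi,\eta}(s)=h(g_2^{-1}sg_1)$ and the two-sided translation invariance explicitly.
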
 

\begin{proof} Immediate from Remark \ref{rem:cyclic}:
  if $\pi$ is the GNS representation associated to $h$, and $v \in\hh$
  is the canonical cyclic vector, then $\pi_{v,v} = h \in D$.   
\end{proof} 

\begin{thm}  
\label{thm:grpcase}  
Let $D \triangleleft \ell^{\infty}(\G)$ be a translation invariant
ideal.  We have that $C^*(\Gamma) = C^*_D(\G)$ if and only if there
exist positive definite functions $h_n \in D$ converging pointwise to
the constant function $1$.
\end{thm}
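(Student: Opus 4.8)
The plan is to establish both implications by translating the equality $C^*(\Gamma) = C^*_D(\Gamma)$ into the existence of a sufficiently rich supply of positive definite functions in $D$, following the template of Proposition~\ref{prop:amenable} but now using the trivial representation's role played by an arbitrary representation. For the ``if'' direction, suppose $h_n \in D$ are positive definite with $h_n \to 1$ pointwise. Since each $h_n$ is positive definite, it defines a GNS representation, which by Lemma~\ref{lem:GNSinvariance} is a $D$-representation (here translation invariance of $D$ is exactly what is needed). The functions $h_n$ tend pointwise to $1$, hence the trivial representation is weakly contained in the direct sum of these GNS representations, which is again a $D$-representation by Remark~\ref{rem:sum}. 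More generally, multiplying $h_n$ by an arbitrary matrix coefficient of the universal representation and invoking the tensor product remark, one sees that \emph{every} matrix coefficient of the universal representation is a pointwise limit of matrix coefficients of $D$-representations; concretely, for a cyclic representation $\sigma$ with cyclic vector $w$, the functions $h_n \cdot \sigma_{w,w}$ are positive definite, lie in $D$ (as $D$ is an ideal), and converge pointwise to $\sigma_{w,w}$. Thus $\sigma$ is weakly contained in a direct sum of $D$-representations, so $\sigma$ extends to $C^*_D(\Gamma)$; since every representation is a direct sum of cyclic ones, the universal representation factors through $C^*_D(\Gamma)$, giving $C^*(\Gamma) = C^*_D(\Gamma)$.

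For the ``only if'' direction, assume $C^*(\Gamma) = C^*_D(\Gamma)$. Then the universal C$^*$-algebra admits a faithful $D$-representation $\pi$; in particular $\pi$ weakly contains the trivial representation of $\Gamma$, so there exist unit vectors $v_n \in \hh$ with $\|\pi_s(v_n) - v_n\| \to 0$ for every $s \in \Gamma$, and hence $\pi_{v_n,v_n}(s) \to 1$ pointwise. The vectors $v_n$ need not have matrix coefficients in $D$, so the key step is to approximate: since $\hh_0$ (the dense subspace witnessing the $D$-representation property) is dense, choose $\xi_n \in \hh_0$ close to $v_n$; then $\pi_{\xi_n,\xi_n} \in D$ by definition of a $D$-representation, these are positive definite, and after normalization $\pi_{\xi_n,\xi_n}/\|\xi_n\|^2 \to 1$ pointwise. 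This produces the desired sequence.

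The main obstacle I anticipate is the ``only if'' direction, specifically the passage from ``$\pi$ is a faithful $D$-representation'' to ``$\pi$ weakly contains the trivial representation.'' Unlike in Proposition~\ref{prop:amenable}, we cannot assume $\pi$ has no compact operators in its image and invoke Glimm's lemma directly --- faithfulness on $C^*(\Gamma)$ is weaker and does not by itself force weak containment of the trivial representation. Instead, the right argument is: the trivial representation, being one-dimensional, certainly defines a representation of $C^*(\Gamma)$; since $C^*(\Gamma) = C^*_D(\Gamma)$, it must be a $D$-representation, meaning the constant function $1$ (its only matrix coefficient, up to scalars) lies in $D$ --- but this is circular if $1 \notin D$, which is the typical case. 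The correct resolution is subtler: one shows that the kernel of $C^*(\Gamma) \to C^*_D(\Gamma)$ is trivial \emph{iff} the trivial representation does not need to be ``manufactured'' from $D$-representations via weak limits, and this is exactly captured by approximate positive-definite functions. I would proceed by contradiction: if no such $h_n$ existed, one could separate the trivial representation from all $D$-representations using a compactness/convexity argument on the state space, producing a nonzero element of $C^*(\Gamma)$ killed by $\|\cdot\|_D$. Making this separation argument precise --- likely via the fact that the set of pointwise limits of positive definite functions in $D$ is closed and convex, and an appeal to the Hahn--Banach or Krein--Milman theorem in a suitable weak-$*$ topology --- is where the real work lies.
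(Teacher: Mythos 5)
Your ``if'' direction is correct and is essentially the paper's own argument in different clothing: multiplying the diagonal matrix coefficient $\sigma_{w,w}$ of a cyclic representation by $h_n$ is exactly the paper's composition of a state with the completely positive Schur multiplier associated to $h_n$, and your conclusion that every cyclic representation is a weak-$*$ limit (on states) of $D$-representations, hence that the quotient $C^*(\G)\to C^*_D(\G)$ is injective, is the same as the paper's observation that vector states of $D$-representations are weak-$*$ dense in the state space of $C^*(\G)$, with Lemma~\ref{lem:GNSinvariance} playing the identical role.

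The problem is the ``only if'' direction, where your second paragraph contains a correct argument and your third paragraph talks you out of it on false grounds, replacing it with a separation argument that you admit you have not carried out. Both halves of your objection are unfounded. First, one \emph{can} arrange that the image of a faithful $D$-representation contains no nonzero compact operators: replace $\pi$ by the infinite direct sum $\pi^{\oplus\infty}$, which is still faithful and is still a $D$-representation by Remark~\ref{rem:sum}; this is exactly what the proof of Proposition~\ref{prop:amenable} does, and Glimm's lemma then exhibits the trivial character as a weak-$*$ limit of vector states. Second, and independently, faithfulness alone already suffices: for representations of a C$^*$-algebra, $\sigma\prec\pi$ is equivalent to $\ker\pi\subseteq\ker\sigma$, so a faithful representation of $C^*(\G)$ weakly contains \emph{every} representation, and since the trivial representation is irreducible, Fell's theorem upgrades weak containment to genuine almost-invariant unit vectors $v_n$. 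Either route lands you precisely where your second paragraph begins, and your perturbation of the $v_n$ into the dense subspace $\hh_0$ (so that $\pi_{\xi_n,\xi_n}\in D$) then finishes the proof. By contrast, your proposed Hahn--Banach/Krein--Milman replacement is only a plan (``where the real work lies''), and your attempt to motivate it by asking whether the constant function $1$ lies in $D$ conflates containment of the trivial representation with weak containment. Delete the third paragraph, justify the weak containment step as above, and your argument coincides with the paper's.
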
 

\begin{proof} 
First assume that the canonical map $C^*(\G) \to C^*_D(\G)$ is an
isomorphism.  Replacing $\ell^p(\Gamma)$ with $D$ in the proof of
Proposition \ref{prop:amenable}, we see how to construct the desired
positive definite functions.  

For the converse, suppose $h_n \in D$ are positive definite functions
such that $h_n(s) \to 1$.  To prove that $C^*(\Gamma) = C^*_D(\G)$, it
suffices to observe that vector states coming from $D$-representations
are weak-$*$ dense in the state space of $C^*(\G)$, since this implies
that the map $C^*(\G) \to C^*_D(\G)$ has a trivial kernel.  So
let $\p$ be a  state on $C^*(\G)$.  Then the formula 
\begin{equation*}
  \p_n \left(\sum_{s \in \G} \alpha_s s\right) := 
      \sum_{s \in \G} \alpha_s h_n(s)\p(s)
\end{equation*}
determines a state on $C^*(\G)$  -- it is the composition of $\p$ and
the completely positive Schur multiplier $C^*(\G)\to C^*(\G)$
associated to $h_n$, cf.\ \cite{BO}.  Since the 
norms of the $\p_n$ are uniformly bounded, we have that $\p_n \to
\p$ in the weak-$*$ topology.  Also, it's clear that $\p_n|_{\G} \in
D$ since it is the product of $h_n$ and $\p|_{\G}$.   So the previous
lemma implies the GNS representations associated to the $\p_n$ are
$D$-representations, concluding the proof.  
\end{proof} 

It has been open for some time whether the Haagerup property ($\equiv$
a-T-menability, see \cite{CCJJV}) admits a C$^*$-algebraic
characterization.  The previous theorem easily implies such a
characterization, which is perfectly analogous to a well-known fact
about amenable groups.  To see the parallel, we isolate two more
canonical ideal completions.

\begin{defn}
  Let $C^*_{c_c}(\G)$ denote the ideal completion associated to the
  ideal $c_c(\G)$ of finitely supported functions; let $C^*_{c_0}(\G)$
  denote the ideal completion associated to $c_0(\G)$, the functions
  vanishing at infinitey.
\end{defn}  

Recall that $\G$ is amenable if there exist positive definite
functions in $c_c(\G)$ converging pointwise to one; similarly $\G$ has
the Haagerup property if there exist positive definite functions in
$c_0(\G)$ converging pointwise to one.  Since both $c_c(\G)$ and
$c_0(\G)$ are translation invariant ideals, our next result follows
immediately from Theorem \ref{thm:grpcase}.  Having already observed that
$C^*_{c_c}(\G)=C^*_r(\G)$, the first statement is classical.
The second statement is closely related to
the following fact: $\G$ has the Haagerup property if and
only if it admits a $c_0$-representation weakly containing the trivial
representation \cite{CCJJV}.

\begin{cor} 
\label{cor:grp} 
For a discrete group $\G$ we have:  $\G$ is amenable if and only if
$C^*(\G) = C^*_{c_c}(\G)$; $\G$ has the Haagerup property if and only if
$C^*(\G) = C^*_{c_0}(\G)$. \qed
\end{cor}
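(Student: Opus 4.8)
The plan is to read off both equivalences directly from Theorem~\ref{thm:grpcase}, once we have checked that the two ideals in play are translation invariant and have recalled the relevant classical characterizations of amenability and of the Haagerup property in terms of positive definite functions.

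First I would note that $c_c(\G)$ and $c_0(\G)$ are both translation invariant ideals of $\ell^{\infty}(\G)$: each is a two-sided ideal, and left- or right-translating a finitely supported (respectively, $c_0$) function again produces a finitely supported (respectively, $c_0$) function. Theorem~\ref{thm:grpcase} therefore applies with $D = c_c(\G)$ and with $D = c_0(\G)$, giving: $C^*(\G) = C^*_{c_c}(\G)$ if and only if there exist positive definite functions $h_n \in c_c(\G)$ converging pointwise to $1$, and $C^*(\G) = C^*_{c_0}(\G)$ if and only if there exist positive definite functions $h_n \in c_0(\G)$ converging pointwise to $1$.

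Second, I would invoke the two standard facts that translate these conditions into the named properties. On the one hand, $\G$ is amenable precisely when there is a sequence of finitely supported positive definite functions tending pointwise to $1$ — a textbook reformulation of amenability, consistent with the already-recorded identification $C^*_{c_c}(\G) = C^*_r(\G)$, under which $C^*(\G) = C^*_{c_c}(\G)$ is just the classical equality $C^*(\G) = C^*_r(\G)$. On the other hand, $\G$ has the Haagerup property precisely when there is a sequence of $c_0$ positive definite functions tending pointwise to $1$; this is one of the equivalent definitions, see \cite{CCJJV}. Combining each of these with the corresponding instance of Theorem~\ref{thm:grpcase} yields the corollary.

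I do not expect any real obstacle: the substance is entirely contained in Theorem~\ref{thm:grpcase}, and the only care required is to cite, rather than reprove, the two classical characterizations and to record that the $c_0$ case is the statement already announced in the surrounding discussion (equivalence with the existence of a $c_0$-representation weakly containing the trivial representation, cf.\ \cite{CCJJV}).
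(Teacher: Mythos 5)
Your proposal is correct and matches the paper's own argument exactly: the corollary is stated with a \qed because it follows immediately from Theorem~\ref{thm:grpcase} once one notes that $c_c(\G)$ and $c_0(\G)$ are translation invariant ideals and recalls the classical characterizations of amenability and the Haagerup property via positive definite functions converging pointwise to one. Nothing to add.
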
 

Since $C^*_{c_0}(\G)$ admits a faithful $c_0$-representation, we
have an analogue of the fact that every representation of an amenable group is weakly contained in the left regular representation.

\begin{cor}\label{cor:c_0}
If\/ $\G$ has the Haagerup property, then every unitary representation
is weakly contained in a $c_0$-representation.\footnote{This can also be deduced from the existence of a $c_0$-representation weakly containing the trivial one.}
 \qed
\end{cor}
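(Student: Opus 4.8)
The plan is to combine two facts already in hand. First, by Corollary~\ref{cor:grp}, the Haagerup property is equivalent to the canonical surjection $C^*(\G) \to C^*_{c_0}(\G)$ being an (isometric) isomorphism. Second, as noted just after Remark~\ref{rem:sum}, the ideal completion $C^*_{c_0}(\G)$ admits a \emph{faithful} $c_0$-representation $\pi$; by construction $\pi$ restricts to a $c_0$-representation of $\G$ in the sense of Definition~\ref{defn:Drep}. The argument then runs parallel to the classical proof that every representation of an amenable group is weakly contained in the regular representation (where one uses $C^*(\G)=C^*_r(\G)$ together with faithfulness of the regular representation on $C^*_r(\G)$).

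Concretely, I would let $\sigma\colon\G\to\B(\hk)$ be an arbitrary unitary representation and observe that for every $x$ in the group ring $\C[\G]$,
\[
  \|\sigma(x)\| \;\le\; \|x\|_{C^*(\G)} \;=\; \|x\|_{c_0} \;=\; \|\pi(x)\|,
\]
the first inequality because the universal norm dominates the norm in every representation, the middle equality by Corollary~\ref{cor:grp}, and the last because $\pi$ is faithful on $C^*_{c_0}(\G)$. Since this holds for all $x\in\C[\G]$, the representation $\sigma$ is weakly contained in the $c_0$-representation $\pi$, which is exactly the claim.

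I do not expect a genuine obstacle here. The only points needing a word of care are the standard reformulation of weak containment as domination of C$^*$-norms on the group ring, and the remark that a faithful representation of the C$^*$-algebra $C^*_{c_0}(\G)$ is in particular a $c_0$-representation of the group $\G$.

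As the footnote indicates, there is an alternative route that avoids faithfulness altogether: the Haagerup property provides a $c_0$-representation $\rho_0$ weakly containing the trivial representation $1_\G$, and then for arbitrary $\sigma$ the tensor product $\rho_0\otimes\sigma$ is again a $c_0$-representation (by the Tensor products remark) which weakly contains $1_\G\otimes\sigma\cong\sigma$, since weak containment is preserved under tensoring by a fixed representation. Either argument completes the proof.
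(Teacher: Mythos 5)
Your argument is correct and is exactly the paper's proof: the authors likewise combine Corollary~\ref{cor:grp} (Haagerup property gives $C^*(\G)=C^*_{c_0}(\G)$) with the fact that $C^*_{c_0}(\G)$ admits a faithful $c_0$-representation, interpreting the resulting norm identity as weak containment. No gaps; the tensor-product alternative you sketch matches the paper's footnote.
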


Jesse Peterson asked if Property (T) can be characterized in this
context, and suggested the following proposition.  For the definition
of Property (T) we refer to \cite{BDV}.

\begin{prop} 
\label{cor:T}
A discrete group $\G$ has Property \emph{(T)} precisely when the
following condition holds:  $\ell^\infty(\G)$ is the only translation
invariant ideal $D$ for which $C^*(\Gamma) = C^*_D(\G)$.
\end{prop}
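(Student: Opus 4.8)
The plan is to route both directions through Theorem~\ref{thm:grpcase}, which says that for a translation invariant ideal $D$ one has $C^*(\Gamma)=C^*_D(\Gamma)$ precisely when $D$ contains positive definite functions converging pointwise to $1$.

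First suppose $\Gamma$ has Property (T) and let $D$ be a translation invariant ideal with $C^*(\Gamma)=C^*_D(\Gamma)$; I must show $D=\ell^\infty(\Gamma)$. By Theorem~\ref{thm:grpcase} there are positive definite $h_n\in D$ with $h_n\to1$ pointwise, so $h_n(e)\to1$ and, for $n$ large, $\tilde h_n:=h_n/h_n(e)\in D$ is a \emph{normalized} positive definite function still tending pointwise to $1$. Here I would invoke the standard consequence of Property (T) that pointwise convergence to $1$ of normalized positive definite functions forces \emph{uniform} convergence -- equivalently, that the trivial character is isolated, in the topology of pointwise convergence, among normalized positive definite functions (see \cite{BDV}). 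Then $\|\tilde h_n-1\|_\infty<1$ for some $n$, so $\tilde h_n$ is invertible in the unital C$^*$-algebra $\ell^\infty(\Gamma)$; since $D$ is an ideal, $1=\tilde h_n^{-1}\tilde h_n\in D$, i.e.\ $D=\ell^\infty(\Gamma)$.

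For the converse I argue contrapositively: assuming $\Gamma$ does \emph{not} have Property (T), I will exhibit a \emph{proper} translation invariant ideal $D$ with $C^*(\Gamma)=C^*_D(\Gamma)$. The one substantial input is that a group without Property (T) admits a \emph{weakly mixing} unitary representation $\sigma\colon\Gamma\to\B(\hh)$ with almost invariant vectors; this follows from a theorem of Connes and Weiss (produce a weakly mixing, non-strongly-ergodic measure preserving action and pass to the Koopman representation on the orthogonal complement of the constants). (When $\Gamma$ is amenable, or has the Haagerup property, one can bypass this and take $D=c_c(\Gamma)$, respectively $D=c_0(\Gamma)$, by Corollary~\ref{cor:grp}, so the general case is the real content.) Given such a $\sigma$, set
\begin{equation*}
  D:=\operatorname{span}\{\, f\,\sigma_{\xi,\eta} : f\in\ell^\infty(\Gamma),\ \xi,\eta\in\hh\,\}.
\end{equation*}
A routine check shows that $D$ is a translation invariant ideal of $\ell^\infty(\Gamma)$ -- translation invariance uses $(t\cdot\sigma_{\xi,\eta})(s)=\ip{\sigma(s)\xi,\sigma(t)\eta}$ -- and that $\sigma$ is a $D$-representation, since every matrix coefficient of $\sigma$ equals $1\cdot\sigma_{\xi,\eta}\in D$.

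It remains to see that $D$ is proper and that $C^*(\Gamma)=C^*_D(\Gamma)$. For properness I would use the standard fact that a weakly mixing representation has $0$ in the weak operator closure of $\{\sigma(s):s\in\Gamma\}$: fix a net $(s_\alpha)$ with $\sigma(s_\alpha)\to0$ weakly; if $1=\sum_{i=1}^m f_i\,\sigma_{\xi_i,\eta_i}$ in $\ell^\infty(\Gamma)$, then evaluating this identity at $s_\alpha$ and passing to the limit gives $1=\lim_\alpha\sum_i f_i(s_\alpha)\ip{\sigma(s_\alpha)\xi_i,\eta_i}=0$, which is absurd, so $1\notin D$. Finally, if $\xi_n$ are almost invariant unit vectors for $\sigma$, then $\varphi_n:=\sigma_{\xi_n,\xi_n}$ are positive definite, lie in $D$, and tend to $1$ pointwise since $|\varphi_n(s)-1|\le\|\sigma(s)\xi_n-\xi_n\|$; Theorem~\ref{thm:grpcase} then gives $C^*(\Gamma)=C^*_D(\Gamma)$ with $D$ a proper translation invariant ideal, completing the contrapositive. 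The main obstacle is thus isolating the Connes--Weiss input; everything else is bookkeeping with Theorem~\ref{thm:grpcase}. Note that $D$ here is exactly the translation invariant ideal generated by the matrix coefficients of $\sigma$, and the argument shows its completion equals $C^*(\Gamma)$ yet is a \emph{proper} ideal completion precisely when $\sigma$ is weakly mixing with almost invariant vectors -- which re-expresses the fact being used.
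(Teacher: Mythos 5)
Your proof is correct, and the two directions deserve separate verdicts. The forward direction (Property (T) forces $D=\ell^\infty(\G)$) is essentially the paper's argument: extract positive definite $h_n\in D$ tending pointwise to $1$ via Theorem~\ref{thm:grpcase}, upgrade to uniform convergence using Property (T), and conclude that some $h_n$ is invertible in $\ell^\infty(\G)$ so that $1\in D$; your normalization by $h_n(e)$ is a small extra precaution the paper skips. The converse is where you genuinely diverge. The paper uses Delorme--Guichardet and Schoenberg: it takes an unbounded conditionally negative type function $\psi$, sets $h_n=e^{-\psi/n}$, and places these in the explicitly described set $D=\{f:\inf_{s\notin F}|f(s)|=0 \text{ for all finite } F\}$. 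You instead invoke Connes--Weiss to get a weakly mixing representation $\sigma$ with almost invariant vectors, take for $D$ the translation invariant ideal generated by the matrix coefficients of $\sigma$, prove properness from the weak-mixing characterization that $0$ lies in the weak operator closure of $\{\sigma(s)\}$, and feed the almost-invariant-vector coefficients into Theorem~\ref{thm:grpcase}. Your input is heavier (Connes--Weiss versus an unbounded conditionally negative type function plus Schoenberg), and your ideal depends on the choice of $\sigma$ where the paper's candidate is canonical. In exchange, your $D$ is \emph{manifestly} an ideal, whereas the paper's set is in fact not closed under addition as written --- on $\Z$ the indicator functions of the even and of the odd integers each satisfy the defining condition, but their sum is the constant function $1$ --- so the paper's ``one readily checks'' needs repair (for instance by replacing that set with a proper translation invariant ideal genuinely containing the functions $e^{-\psi/n}$, such as $\{f:\lim_k f(ts_ku)=0\ \text{for all}\ t,u\}$ for a sequence $s_k$ with $\psi(s_k)\to\infty$). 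Your route sidesteps this issue entirely; both arguments establish the proposition once that detail is attended to.
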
 

\begin{proof}  
  First, suppose that $\G$ has Property (T) and that $D$ is a
  translation invariant ideal for which $C^*_D(\G)=C^*(\G)$.  We must
  show that $D=\ell^\infty(\G)$.  But, by Theorem~\ref{thm:grpcase}
  there exist positive definite functions $h_n \in D$ converging
  pointwise to one.  Since $\G$ has Property (T) they converge
  uniformly to one.  Thus, some $h_n$ is bounded away from zero,
  and so is invertible in $\ell^\infty(D)$.

Conversely, suppose that $\G$ does \emph{not} have Property (T).  Let 
\begin{equation*}
  D = \{\, f\in\ell^\infty(\G) \colon 
              \text{$\inf_{s\notin F} |f(s)| = 0$ 
               for every finite $F\subset\G$} \,\}.
\end{equation*}
One readily checks that $D$ is a proper, translation invariant ideal
in $\ell^\infty(\G)$.  We shall show that $C^*_D(\G)=C^*(\G)$.  By
Theorem~\ref{thm:grpcase} we must exhibit positive definite functions
in $D$ converging pointwise to one.  Since $\G$ does not
have Property (T) there exists an unbounded, conditionally negative
type function $\psi$ on $\G$; the desired functions are 
$h_n = e^{-\psi/n}$.
\end{proof}

\section{Quantum groups}

Recall that a \emph{compact quantum group} is a pair $(A, \Delta)$
where $A$ is a unital C$^*$-algebra and 
$\Delta \colon A \to A \otimes A$ is a unital $*$-homomorphism
satisfying the following two conditions:

\begin{enumerate} 
\item $(\Delta \otimes \id_A) \Delta = (\id_A \otimes \Delta) \Delta$, and

\item $\Delta(A)(A \otimes 1)$ and $\Delta(A)(1 \otimes A)$ are dense
  subspaces of $A\otimes A$.\footnote{All tensor products in this
    definition are spatial (cf.\ \cite{BO}).}  
\end{enumerate} 
The map $\Delta$ is the \emph{co-multiplication} and the first
property is called \emph{co-associativity}.

Discrete groups provided an early source of examples of quantum
groups.  Indeed, the assignment $\Delta(s) =  s \otimes s$ on group
elements determines a co-associative map
\begin{equation}
\label{delta}
  \Delta \colon \C[\G] \to \C[\G] \otimes \C[\G],
\end{equation}
and one can check that $\Delta(\C[\G])(\C[\G] \otimes 1) =
\Delta(\C[\G])(1 \otimes \C[\G]) = \C[\G]\otimes \C[\G]$.  Thus, if
$A$ is a C$^*$-algebra containing $\C[\G]$ as a dense subalgebra and
for which $\Delta$ can be extended continuously to a map $A \to A
\otimes A$, then $A$ is a compact quantum group.  Every discrete group
gives rise to two canonical compact quantum groups: the universal
property ensures that $A=C^*(\G)$ is a compact quantum group whereas
Fell's absorption principle implies that $A=C_r^*(\G)$ is a compact
quantum group.  Anything between these extremes is considered `exotic'
(cf.\ \cite{KS}).  The purpose of this section is to provide examples
of such exotic compact quantum groups.

\begin{prop}  
For every ideal $D \triangleleft \ell^{\infty}(\G)$, the ideal
completion $C^*_D(\G)$ is a
compact quantum group.  
\end{prop}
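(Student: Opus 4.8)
The plan is to show that the co-multiplication $\Delta\colon\C[\G]\to\C[\G]\otimes\C[\G]$ of \eqref{delta} extends to a $*$-homomorphism $C^*_D(\G)\to C^*_D(\G)\otimes C^*_D(\G)$; once this is done, co-associativity and the density conditions pass to the completion automatically, since they already hold on the dense subalgebra $\C[\G]\otimes\C[\G]$ (the density conditions are even equalities there, as noted in the excerpt). So the only real content is the existence of the continuous extension of $\Delta$. By the universal property of $C^*_D(\G)$ (every $D$-representation extends), it suffices to check that for every $D$-representation $\rho$ of $C^*_D(\G)\otimes C^*_D(\G)$, the composition $\rho\circ\Delta\colon\G\to\B(\hh_\rho)$ is a $D$-representation of $\G$. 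Here I use that $C^*_D(\G)\otimes C^*_D(\G)$ — the spatial tensor product — has a faithful representation, so it is enough to produce the extension against one faithful representation; equivalently, it is enough to show that $\|\Delta(x)\|\le\|x\|_D$ for $x\in\C[\G]$, where the left-hand norm is the spatial norm on $C^*_D(\G)\otimes C^*_D(\G)$.

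First I would take a faithful $D$-representation $\pi\colon C^*_D(\G)\to\B(\hh)$, which exists by Remark~\ref{rem:sum} and the discussion following it; then $\pi\otimes\pi\colon C^*_D(\G)\otimes C^*_D(\G)\to\B(\hh\otimes\hh)$ is a faithful representation of the spatial tensor product. Composing with $\Delta$ gives a unitary representation $\sigma:=(\pi\otimes\pi)\circ\Delta$ of $\G$ on $\hh\otimes\hh$, concretely $\sigma_s=\pi_s\otimes\pi_s$. The key point is that $\sigma$ is a $D$-representation of $\G$: if $\hh_0\subset\hh$ is the dense subspace witnessing that $\pi$ is a $D$-representation, then for $v_1,v_2,w_1,w_2\in\hh_0$ we compute
\begin{equation*}
  \sigma_{v_1\otimes w_1,\,v_2\otimes w_2}(s)
   = \ip{\pi_s v_1,v_2}\,\ip{\pi_s w_1,w_2}
   = \pi_{v_1,v_2}(s)\,\pi_{w_1,w_2}(s),
\end{equation*}
which lies in $D$ because $D$ is an ideal and $\pi_{v_1,v_2},\pi_{w_1,w_2}\in D$. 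Since the simple tensors $v_1\otimes w_1$ with $v_i\in\hh_0$ span a dense subspace of $\hh\otimes\hh$, and matrix coefficients depend sesquilinearly on the vectors (so a linear subspace of $\ell^\infty(\G)$ containing the $\sigma_{v_1\otimes w_1, v_2\otimes w_2}$ contains all matrix coefficients from $\hh_0\otimes\hh_0$), the representation $\sigma$ is a $D$-representation. Hence $\|\sigma(x)\|\le\|x\|_D$ for all $x\in\C[\G]$, and since $\pi\otimes\pi$ is faithful this says exactly $\|\Delta(x)\|_{\mathrm{spatial}}\le\|x\|_D$, giving the desired bounded extension $\Delta\colon C^*_D(\G)\to C^*_D(\G)\otimes C^*_D(\G)$.

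The extension $\Delta$ is a $*$-homomorphism because it is a continuous extension of a $*$-homomorphism defined on a dense $*$-subalgebra; it is unital since $\Delta(1)=1\otimes1$ already on $\C[\G]$. Co-associativity, $(\Delta\otimes\id)\Delta=(\id\otimes\Delta)\Delta$, holds on the dense set of group elements (both sides send $s\mapsto s\otimes s\otimes s$) and hence everywhere by continuity; and $\Delta(C^*_D(\G))(C^*_D(\G)\otimes1)$ is dense in $C^*_D(\G)\otimes C^*_D(\G)$ because it contains the dense subspace $\Delta(\C[\G])(\C[\G]\otimes1)=\C[\G]\otimes\C[\G]$, and similarly on the other side. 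This verifies the two axioms, so $(C^*_D(\G),\Delta)$ is a compact quantum group. The only step requiring genuine care is the passage from the algebraic $\Delta$ to a spatially continuous one, i.e.\ the $D$-representation computation above; everything else is formal. \qed
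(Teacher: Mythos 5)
Your proof is correct and follows essentially the same route as the paper: fix a faithful $D$-representation $\pi$ of $C^*_D(\G)$, observe that $(\pi\otimes\pi)\circ\Delta$ is again a $D$-representation (the paper invokes its ``tensor products'' remark, which you re-derive in the special case $\pi\otimes\pi$ via the product of matrix coefficients landing in the ideal), and conclude by the universal property. Your write-up is just a more detailed version, additionally spelling out the verification of co-associativity and the density axioms, which the paper leaves implicit.
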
 

\begin{proof} 
We shall show that the map $\Delta$ in (\ref{delta}) extends
continuously to $C^*_D(\G) \to C^*_D(\G) \otimes C^*_D(\G)$.  
To this end, fix a faithful $D$-representation $C^*_D(\G)
\subset B(\hh)$.  By Remark~\ref{prop:funct}, we can regard 
the composite
\begin{equation*}
  \Delta \colon \C[\G] \to C^*_D(\G)\otimes C^*_D(\G) 
        \subset B(\hh\otimes \hh)
\end{equation*}
as a $D$-representation and the universal property ensures that
$\Delta$ extends to $C^*_D(\G)$. 
\end{proof}

Thus our task is to provide examples of groups $\G$ and ideals 
$D \triangleleft \ell^{\infty}(\G)$ for which 
$C^*(\G) \neq C^*_D(\G) \neq C^*_r(\G)$.  Though a bit ad hoc, our
first examples are easy to handle.

\begin{prop} 
\label{prop:exoticT} 
Suppose that $\G$ has Property \textup{(}T\textup{)} and that
$\Lambda\triangleleft\G$ is a non-amenable normal subgroup of
infinite index.  Suppose $D_1\triangleleft\ell^\infty(\G)$ and
$D_2\triangleleft\ell^\infty(\G/\Lambda)$ are ideals satisfying the
following conditions:
\begin{enumerate}
\item $D_1$ is proper and translation invariant;
\item $D_2$ is translation invariant;
\item $D_1$ contains the image of $D_2$ under the inclusion
  $\ell^\infty(\G/\Lambda)\subset \ell^\infty(\G)$.
\end{enumerate}
Then $C^*_{D_1}(\G)$ is an exotic compact quantum group.
\end{prop}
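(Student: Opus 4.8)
The plan is to show the two inequalities $\|\cdot\|_{D_1} \neq \|\cdot\|_{\ell^\infty}$ and $\|\cdot\|_{D_1} \neq \|\cdot\|_{c_c}$, i.e.\ that $C^*_{D_1}(\G)$ is strictly between $C^*(\G)$ and $C^*_r(\G)$. For the first, I would invoke Proposition~\ref{cor:T}: since $\G$ has Property (T) and $D_1$ is a proper, translation invariant ideal, it is \emph{not} the case that $C^*(\G) = C^*_{D_1}(\G)$, so the canonical surjection $C^*(\G)\to C^*_{D_1}(\G)$ has nonzero kernel. This is immediate and is really the point of having stated Proposition~\ref{cor:T}.

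For the second inequality — that $C^*_{D_1}(\G)$ is not the reduced C$^*$-algebra — I would use the quotient group and hypothesis (3). By part~(\ref{prop:funct3}) of Corollary~\ref{prop:funct}, the inclusion $\ell^\infty(\G/\Lambda)\subset\ell^\infty(\G)$ together with $D_2\subset D_1$ (in the sense of images) yields a surjection $C^*_{D_1}(\G)\to C^*_{D_2}(\G/\Lambda)$ extending the quotient map $\C[\G]\to\C[\G/\Lambda]$. Now $D_2$ is a nonzero translation invariant ideal in $\ell^\infty(\G/\Lambda)$, hence contains $c_c(\G/\Lambda)$, so $C^*_{D_2}(\G/\Lambda)$ surjects onto $C^*_r(\G/\Lambda)$; composing, we get a surjection $C^*_{D_1}(\G)\to C^*_r(\G/\Lambda)$ extending the quotient map on group rings. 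The key observation is then that no such surjection can factor through $C^*_r(\G)$: if it did, we would obtain a $*$-homomorphism $C^*_r(\G)\to C^*_r(\G/\Lambda)$ extending the quotient map $\C[\G]\to\C[\G/\Lambda]$, equivalently the trivial (one-dimensional) representation of $\Lambda$ would be weakly contained in the restriction to $\Lambda$ of the regular representation of $\G$. Since $\Lambda$ has infinite index, $\lambda_\G|_\Lambda$ is a (infinite) multiple of $\lambda_\Lambda$, so this would force $\Lambda$ to be amenable, contradicting the hypothesis that $\Lambda$ is non-amenable. Therefore $C^*_{D_1}(\G)\neq C^*_r(\G)$, and combined with the previous paragraph, $C^*(\G)\neq C^*_{D_1}(\G)\neq C^*_r(\G)$, so $C^*_{D_1}(\G)$ is an exotic compact quantum group.

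The main obstacle is the last step: verifying cleanly that a $*$-homomorphism $C^*_r(\G)\to C^*_r(\G/\Lambda)$ extending the quotient on group rings cannot exist when $\Lambda$ is non-amenable of infinite index. The cleanest route is the weak-containment argument above — restrict such a hypothetical homomorphism to $C^*_r(\Lambda)\subset C^*_r(\G)$; it would send $\C[\Lambda]$ onto $\C\cdot 1$, i.e.\ give a character on $C^*_r(\Lambda)$, which exists only when $\Lambda$ is amenable. One must be slightly careful that the surjection $C^*_{D_1}(\G)\to C^*_r(\G/\Lambda)$ genuinely extends the quotient map on $\C[\G]$ (so that it is compatible with the surjection $C^*(\G)\to C^*_{D_1}(\G)$ and with $C^*(\G)\to C^*_r(\G)$), but this is guaranteed by the parenthetical assertions in Corollary~\ref{prop:funct} and Proposition~\ref{prop:2}. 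The role of each hypothesis is then transparent: Property (T) and properness of $D_1$ give the upper strict inequality; non-amenability of $\Lambda$, its infinite index, and condition (3) give the lower one; translation invariance of $D_1$ and $D_2$ is what makes all the functoriality and the passage to $C^*_r$ available.
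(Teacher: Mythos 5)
Your proposal is correct and follows essentially the same route as the paper: the upper strict inequality via Proposition~\ref{cor:T} (Property (T) plus properness and translation invariance of $D_1$), and the lower one by composing the surjection from Corollary~\ref{prop:funct}(\ref{prop:funct3}) with $C^*_{D_2}(\G/\Lambda)\to C^*_r(\G/\Lambda)$ and observing that the resulting map, if it factored through $C^*_r(\G)$, would restrict to a character on $C^*_r(\Lambda)$, forcing $\Lambda$ to be amenable. The extra details you supply (that a nonzero translation invariant $D_2$ contains $c_c(\G/\Lambda)$, and the weak-containment reformulation) are exactly the facts the paper relies on implicitly.
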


\begin{proof} 
We must show that $C^*(\G)\neq C^*_{D_1}(\G)\neq C^*_r(\G)$.  
Since $D_1$ is proper and translation invariant, and $\G$ has Property
(T) the first inequality follows from Proposition~\ref{cor:T}.

To prove the second inequality, suppose to the contrary that
$C^*_{D_1}(\G)=C^*_r(\G)$. Applying (\ref{prop:funct3}) of
Corollary~\ref{prop:funct} we obtain a $*$-homomorphism 
\begin{equation*}
  C^*_r(\G) = C^*_{D_1}(\G) \to C^*_{D_2}(\G/\Lambda) 
      \to C^*_r(\G/\Lambda)
\end{equation*}
extending the homomorphism $\G\to \G/\Lambda$.  It follows that
$\Lambda$ is amenable -- the composite 
$C^*_r(\Lambda) \subset C^*_r(\G) \to C^*_r(\G/\Lambda)$ defines a
character of $C^*_r(\Lambda)$.
\end{proof} 

\begin{rem} 
While the hypotheses of the previous proposition may seem a bit
contrived, examples are plentiful.  An extension of Property (T)
groups will again have Property (T)
\cite[Proposition 1.7.6]{BDV}. As for the ideals, taking $D_1$ to
be the ideal generated by the image of $D_2$ in $\ell^\infty(\G)$ we
have: if $D_2$ is translation invariant, then so is $D_1$; if 
$D_2 \subset c_0(\G/\Lambda)$, then $D_1$ is proper.  
\end{rem} 

Free groups also provide natural examples. We thank Rufus
Willett for suggesting the proof of the following result.

\begin{prop}
\label{prop:freegroup} 
Let $\FF$ be a free group on two or more generators.  There
exists a $p \in (2,\infty)$ such that 
$C^*(\FF) \neq C^*_{\ell^p}(\FF) \neq C^*_r(\FF)$.  
\end{prop}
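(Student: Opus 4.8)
The plan is to sandwich a single $\ell^p$-completion strictly between the full and reduced $C^*$-algebras by exploiting the known behavior of matrix coefficients on free groups. The key input is Haagerup's inequality and the classical fact that the free group $\FF$ is non-amenable but has the Haagerup property; more precisely, I want to use that $\FF$ admits a proper length function $|\cdot|$ for which the functions $s\mapsto r^{|s|}$ are positive definite for $r\in(0,1)$ (Haagerup), while at the same time Haagerup's inequality controls the $C^*_r$-norm of finitely supported functions in terms of their $\ell^2$-norm weighted by word length. First I would fix a large $p$ and consider the translation invariant ideal $\ell^p(\FF)$, so that by functoriality (Corollary~\ref{prop:funct}) and Proposition~\ref{prop:2} there is always a chain $C^*(\FF)\to C^*_{\ell^p}(\FF)\to C^*_r(\FF)$; the task is to show both maps are non-injective for a suitable $p$.

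For the left-hand inequality $C^*(\FF)\neq C^*_{\ell^p}(\FF)$, I would invoke Theorem~\ref{thm:grpcase}: equality would force the existence of positive definite $h_n\in\ell^p(\FF)$ tending pointwise to $1$. Since $\FF$ is non-amenable this cannot happen for $p$ close to $1$ by Proposition~\ref{prop:amenable}, but it \emph{can} happen for all $p$ since $\FF$ has the Haagerup property and $r^{|s|}\in\ell^p(\FF)$ for every $p$; so this direction is actually automatic only if the left map were always injective, which it is not obviously. The correct route is different: I would use that $C^*_{\ell^p}(\FF)=C^*_r(\FF)$ for all $p\in[1,2]$, and then argue that as $p$ increases the completions form a decreasing family of quotients of $C^*(\FF)$, with $C^*_{\ell^p}\to C^*_r$ injective for small $p$ and $C^*_{\ell^p}=C^*$ is impossible for any finite $p$ by Proposition~\ref{prop:amenable} (non-amenability of $\FF$). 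Hence for every finite $p$ the left inequality $C^*(\FF)\neq C^*_{\ell^p}(\FF)$ holds automatically; only the right inequality requires real work.

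For the right-hand inequality, I would choose a specific $p$ and exhibit an $\ell^p$-representation not weakly contained in the regular representation — equivalently, a state on $C^*_{\ell^p}(\FF)$ that does not extend to $C^*_r(\FF)$. Here is where Willett's idea enters: one takes the boundary representation, or more concretely a representation on $\ell^2$ of the tree associated to a harmonic/quasi-regular cocycle, whose diagonal matrix coefficients decay like $|s|^{k}r^{|s|}$ or like $(1+|s|)^{-\alpha}$ for a tunable $\alpha$. Such a representation has matrix coefficients in $\ell^p(\FF)$ precisely when $p>p_0$ for an explicit threshold $p_0$ determined by $\alpha$ and the exponential growth rate of $\FF$, yet by Haagerup's inequality (which gives the sharp $\ell^2$-with-polynomial-weight description of $C^*_r$-norms, cf.\ the Akemann–Ostrand / Haagerup phenomenon) it is \emph{not} weakly contained in $\lambda$ once the decay is slow enough. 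Choosing $\alpha$ in the window where the matrix coefficients land in $\ell^p$ for some finite $p>2$ but the representation is not weakly contained in $\lambda$ yields the desired strict inequality, and picking $p$ just above that threshold finishes the proof.

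The main obstacle I anticipate is the right-hand inequality: one needs a representation whose matrix coefficients are simultaneously slow enough to fall outside $\ell^2$ (so as not to be forced into $\pi\prec\lambda$ by Cowling–Haagerup–Howe, Proposition~\ref{prop:2}) yet fast enough to lie in some $\ell^p$ with $p<\infty$, together with a genuine obstruction — via Haagerup's inequality or via the non-nuclearity/non-weak-containment estimates for $C^*_r(\FF)$ — certifying that it is \emph{not} weakly contained in $\lambda$. Verifying both the precise summability exponent and the failure of weak containment for an explicit such representation is the delicate computation; everything else follows formally from the results already in the excerpt.
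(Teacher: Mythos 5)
Your handling of the left-hand inequality reaches the right conclusion --- non-amenability of $\FF$ plus Proposition~\ref{prop:amenable} gives $C^*(\FF)\neq C^*_{\ell^p}(\FF)$ for \emph{every} finite $p$, which is exactly what the paper does --- but the intermediate claim that ``$r^{|s|}\in\ell^p(\FF)$ for every $p$'' is false: for fixed $r\in(0,1)$ the function $s\mapsto r^{|s|}$ lies in $\ell^p(\FF)$ only for $p>\log(2m-1)/\log(1/r)$, a threshold that blows up as $r\to 1$, and this is precisely why no single finite $p$ can carry positive definite functions tending pointwise to $1$ on a non-amenable group.

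The genuine gap is in the right-hand inequality. Your plan --- exhibit one representation with matrix coefficients in $\ell^p$ for some finite $p>2$ that is not weakly contained in $\lambda$, certified by Haagerup's inequality --- is the quantitative route (essentially what Okayasu carries out in \cite{O}), but as written it is a program rather than a proof: no representation is pinned down, neither the summability exponent nor the failure of weak containment is verified, and one of your proposed decay profiles, $(1+|s|)^{-\alpha}$, cannot lie in any $\ell^p(\FF)$ with $p<\infty$ because $\FF$ has exponential growth. The paper avoids all of this with a soft argument you are missing: take Haagerup's functions $h_n(s)=e^{-|s|/n}$, note that each lies in $\ell^{p_n}(\FF)$ once $|S|e^{-p_n/n}<1$, and let $\pi_n$ be the associated GNS representations with cyclic vectors $v_n$. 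Since $h_n\to 1$ pointwise, $\bigoplus_n\pi_n$ weakly contains the trivial representation; if every $C^*_{\ell^{p_n}}(\FF)$ equalled $C^*_r(\FF)$, this direct sum would be defined on $C^*_r(\FF)$, contradicting non-amenability. Hence \emph{some} $p_n$ works, with no need to identify it and no appeal to Haagerup's inequality. If you want to complete your own route instead, the cleanest fix is to run your obstruction on the single GNS representation of $h_r(s)=r^{|s|}$ for $r$ close to $1$: Haagerup's inequality forces any $\pi\prec\lambda$ with cyclic vector $v$ to satisfy $\sum_{|s|=k}\pi_{v,v}(s)\leq (k+1)\left(\#\{s:|s|=k\}\right)^{1/2}$, which $r^{|s|}$ violates for large $k$ once $r^2(2m-1)>1$, while $h_r\in\ell^p(\FF)$ for $p>\log(2m-1)/\log(1/r)$; but these estimates must actually be written down, and they constitute the entire content of the step.
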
 

\begin{proof} 
Since $\FF$ is not amenable, Proposition~\ref{prop:amenable} implies
that $C^*(\FF) \neq C^*_{\ell^p}(\FF)$ for all finite $p$.  We must find
some $p$ such that $C^*_{\ell^p}(\FF) \neq C^*_r(\FF)$. 

Let $S \subset \FF$ be the standard symmetric generating set and let $| \cdot |$
denote the corresponding word length.  A seminal result, first proved
by Haagerup \cite{H}, states that for every $n \in\mathbb{N}$, 
\begin{equation*}
  h_n(s) := e^{-| s |/n}
\end{equation*}
is positive definite.  Clearly $h_n \to 1$ pointwise.  Fixing $n$, we
have  $h_n \in \ell^{p_n}(\Gamma)$ for sufficiently large $p_n$;
indeed if $p_n$ is chosen so that  
$|S| < e^{p_n/n}$, or equivalently 
$|S| e^{-p_n/n} < 1$, then
\begin{equation*}
  \sum_{s \in \G} (e^{-|s|/n})^{p_n} = 
\sum_{k = 1}^{\infty} \bigg( \sum_{|s| = k} e^{-kp_n/n} \bigg) 
\leq \sum_{r = 1}^{\infty} \big( |S|^k e^{-kp_n/n} \big) 
= \sum_{r = 1}^{\infty} \big( |S| e^{-p_n/n} \big)^k 
< \infty.
\end{equation*}

Let $\pi_n \colon C^*_{\ell^{p_n}} (\G) \to B(\hh_n)$ be the GNS
representations corresponding to $h_n$, and let $v_n \in \hh_n$ be
the canonical cyclic vector.  Since since $h_n(s) \to 1$ we see that 
$\| \pi_n(s)v_n - v_n \| \to 0$, for all $s \in \G$.  Hence the direct
sum representation $\oplus \pi_n$ weakly contains
the trivial representation.  It follows that we cannot have
$C^*_{\ell^{p_n}}(\G) = C^*_r(\G)$ for all $n$ -- otherwise
$\oplus \pi_n$ would be defined on $C^*_r(\G)$
and nonamenability prevents the trivial representation from being
weakly contained in any representation of $C^*_r(\G)$.  
\end{proof} 

\begin{rem}
  The previous proposition is not optimal; Higson, Ozawa and Okayasu
  \cite{O} have independently shown that the $C^*$-algebras
  $C^*_{\ell^p}(\F_n)$ are mutually non-isomorphic.  On the other
  hand, 
  extracting the crucial ingredients from the proof, we see
  that the phenomenon presented there is very general.  Indeed,
  suppose that $\G$ is a non-amenable, a-T-menable group admitting an $\N$-valued
  conditionally negative type function $\psi$ satisfying an estimate
  of the following form: there exists $C>0$ such that for every $k$ we
  have
\begin{equation}
\label{psigrowth}
  \# \{\, s\in\G \colon \psi(s) = k \,\} \leq C^k.
\end{equation}
Taking $h_n(s) = e^{-\psi(s)/n}$ the above proof applies verbatim to
show that $C^*_{\ell^p}(\G)\neq C^*_r(\G)$ for some $p$.  This
applies, for example, to infinite Coxeter groups -- the word length
function corresponding to the standard Coxeter generators satisfies
the hypothesis for $\psi$ \cite{BJS}.
\end{rem} 

\begin{rem}
  Continuing the previous remark, suppose a non-amenable group
  $\Gamma$ acts (cellularly) on a $\hbox{CAT}(0)$ cube complex $X$.
  The combinatorial distance $d$ in the one skeleton of $X$ defines an
  $\N$-valued conditionally negative type function on $\Gamma$ by
  \begin{equation*}
    \psi(x) = d(x_0,s\cdot x_0),
  \end{equation*}
  where $x_0$ is any arbitrarily chosen vertex in $X$
  \cite{NR}.\footnote{While stated only for finite dimensional
    complexes, the proof given is valid in greater generality.}   If
  $\Gamma$ is finitely generated and the orbit map $s\mapsto s\cdot
  x_0: \Gamma\to X$ is a quasi-isometric embedding then the inequality
  (\ref{psigrowth}) is satisfied.  These two conditions hold
  in many common situations: by the Svarc-Milnor Lemma, they are
  automatic if the action is proper and cocompact
  \cite{BH} and the complex is finite dimensional; they also hold 
  for the action of Thompson's group $F$
  or, more generally, a finitely generated
  diagram group with {\it Property B\/}, on its Farley complex
  \cite{AGS}.
\end{rem}



\section{Topological Dynamical Systems}

Let $\G$ be a discrete group, and let $X$ be a compact Hausdorff space
on which $\G$ acts by homeomorphisms.  Thinking of transformation
groupoids, let $C_c (X \rtimes \G)$ denote the convolution algebra of
compactly supported functions on $X \times \G$.  We shall represent
elements of this algebra as finite formal sums $\sum f_s s$, where
each $f_s\in C(X)$; we shall view $\G$ as a subset of the convolution
algebra in the obvious manner.

\begin{defn} 
Let $D \triangleleft \ell^{\infty}(\G)$ be an ideal.   A
$*$-representation $\pi\colon C_c (X \rtimes \G) \to \B(\hh)$ is a
\emph{$D$-representation} if $\pi|_\G$ is a $D$-representation in the
sense of Definition~\ref{defn:Drep}.  
\end{defn} 

\begin{defn} 
\label{defn:dynamdecay} 
Let $D \triangleleft \ell^{\infty}(\G)$ be an ideal.  
Define a C$^*$-norm $\| \cdot \|_D$ on $C_c (X \rtimes \G)$ by 
\begin{equation*}
  \left\| \sum f_s s \right\|_D := 
         \sup \left\{\, \left\|\pi\left(\sum f_s s\right)\right\| : 
            \pi \textrm{ is a D-representation} \,\right\},
\end{equation*}
and let $C^*_D(X \rtimes \G)$ denote the completion of $C_c (X \rtimes
\G)$ with respect to $\| \cdot \|_D$.  
\end{defn} 

As before, every $D$-representation extends uniquely to 
$C^*_D(X \rtimes \G)$; further, $C^*_D(X \rtimes \G)$ admits a
\emph{faithful} $D$-representation.  
Considering the universal properties, one sees that 
$C^*_{\ell^\infty}(X \rtimes \G)$ is the universal
(or full) crossed product C$^*$-algebra, denoted $C^*(X\rtimes\G)$.
It is not clear whether the 
analogue of Proposition \ref{prop:2} holds in the present context.

Recall that a function $h\colon X\times \G \to \C$ is \emph{positive
  definite} if for each finite set $s_1, \ldots, s_n \in \G$ and point
$x \in X$, the matrix 
\begin{equation*}
  [h(s_i .x, s_i s_j^{-1})]_{i,j} \in \M_n(\C)
\end{equation*}
is positive (semi-definite); here $x \mapsto s.x$ denotes the action
of $s$ on $X$.  Recall also that to each positive definite $h$ we can
associate a completely positive Schur multiplier
\begin{equation*}
  \textrm{m}_{h} \colon C^*(X \rtimes \G) \to C^*(X \rtimes \G);
\end{equation*}
on finite sums $\textrm{m}_h$ is given by the formula
\begin{equation*}
  \textrm{m}_{h}\left(\sum f_s s\right) = 
            \sum f_s h( s) s
\end{equation*}
where, slightly abusing notation, we have written $h(s)\in C(X)$ for
the function $x \mapsto h(x,s)$ \cite[Proposition~5.6.16]{BO}.

\begin{lem} 
\label{lem:pdgns} 
Suppose that $D \triangleleft \ell^{\infty}(\G)$ is a translation
invariant ideal; suppose that $h\colon X\times \G \to \C$ is
positive definite, and that the function $H(s) : = \| h(s) \|$ belongs 
to $D$. Then, for every state $\p$ on $C^*(X \rtimes \G)$, the GNS
representation associated to $\p \circ \textrm{m}_{h}$ is a
$D$-representation.  
\end{lem}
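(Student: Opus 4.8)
The plan is to realize the GNS representation of $\psi:=\p\circ\textrm{m}_h$ concretely and to read off its matrix coefficients on $\G$. Since $\textrm{m}_h$ is completely positive and $\p$ is a state, $\psi$ is a positive linear functional on the unital C$^*$-algebra $C^*(X\rtimes\G)$ (unital since $X$ is compact); let $(\pi_\psi,\hh_\psi,v_\psi)$ be its GNS triple, so $\pi_\psi(C^*(X\rtimes\G))v_\psi$ is dense in $\hh_\psi$ and $\psi(a)=\langle\pi_\psi(a)v_\psi,v_\psi\rangle$. I then need a dense subspace $\hh_0\subset\hh_\psi$ all of whose matrix coefficients $(\pi_\psi|_\G)_{\xi,\eta}$ lie in $D$. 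Here the group-theoretic shortcut is unavailable: in the group case one applies Remark~\ref{rem:cyclic} because the GNS vector is cyclic already for the restriction to $\G$, but $C^*(X\rtimes\G)$ is generated by $\G$ \emph{together with} $C(X)$, so $v_\psi$ need not be cyclic for $\pi_\psi|_\G$. Instead I would take $\hh_0:=\pi_\psi\big(C_c(X\rtimes\G)\big)v_\psi$, dense because $C_c(X\rtimes\G)$ is dense in $C^*(X\rtimes\G)$ and $\pi_\psi$ is bounded.

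Next, for $a,b\in C_c(X\rtimes\G)$, set $\xi=\pi_\psi(a)v_\psi$ and $\eta=\pi_\psi(b)v_\psi$; unwinding the GNS inner product gives
\begin{equation*}
  (\pi_\psi|_\G)_{\xi,\eta}(s)=\langle\pi_\psi(s)\xi,\eta\rangle=\psi(b^*sa)=\p\big(\textrm{m}_h(b^*sa)\big),\qquad s\in\G,
\end{equation*}
so everything reduces to showing that $s\mapsto\p(\textrm{m}_h(b^*sa))$ belongs to $D$. Writing $a=\sum_i f_is_i$ and $b=\sum_j g_jt_j$ with $f_i,g_j\in C(X)$ and $s_i,t_j\in\G$, and using the covariance relation $s\cdot f=({}^{s}f)\cdot s$ in the convolution algebra, one checks that $b^*sa$ is a finite sum of terms $F_{ij}(s)\cdot(t_j^{-1}ss_i)$ with $F_{ij}(s)\in C(X)$ and $\|F_{ij}(s)\|_\infty\le\|f_i\|_\infty\|g_j\|_\infty$ independently of $s$; applying $\textrm{m}_h$ inserts the factor $h(t_j^{-1}ss_i)$ into the $(i,j)$th term. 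Since $C(X)$ sits isometrically in $C^*(X\rtimes\G)$, group elements are unitaries, and $\|h(u)\|_\infty=H(u)$, this gives
\begin{equation*}
  \big|\p(\textrm{m}_h(b^*sa))\big|\le\|\textrm{m}_h(b^*sa)\|\le\sum_{i,j}\|f_i\|_\infty\|g_j\|_\infty\,H(t_j^{-1}ss_i).
\end{equation*}

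Finally, for each $(i,j)$ the function $s\mapsto H(t_j^{-1}ss_i)$ is obtained from $H\in D$ by one left and one right translation, hence lies in $D$ by translation invariance, so the right-hand side above is a finite element of $D$. Because $D$ is an ideal in $\ell^\infty(\G)$ it is closed under pointwise domination — if $|g|\le|f|$ pointwise with $f\in D$, then $g=(g/f)\,f\in D$, reading $g/f$ as $0$ where $f$ vanishes — and therefore $s\mapsto\p(\textrm{m}_h(b^*sa))$ itself lies in $D$. This shows $\pi_\psi|_\G$ is a $D$-representation of $\G$, that is, $\pi_\psi$ is a $D$-representation of $C^*(X\rtimes\G)$, which is the claim. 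The hard part will be the convolution-algebra bookkeeping in the middle step: one has to track things carefully enough to see the matrix coefficient dominated by a finite combination of \emph{two-sided} translates of $H$, and it is precisely the two-sided translation invariance of $D$ — the ``left'' elements $t_j^{-1}$ coming from $b^*$, the ``right'' elements $s_i$ from $a$ — that makes this go through.
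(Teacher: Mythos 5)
Your proposal is correct and follows essentially the same route as the paper: both take the dense subspace spanned by $\pi_\psi(fs)v_\psi$, bound the resulting matrix coefficients by (sums of) two-sided translates of $H$, and invoke the hereditary property of the ideal $D$ together with its translation invariance. The only difference is that you carry out explicitly the ``straightforward calculation'' the paper leaves to the reader.
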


\begin{proof}  
Note that for $f \in \ell^{\infty}(\G)$ we have, 
$f \in D \Longleftrightarrow |f| \in D$ -- this follows from the polar
decomposition $f = u|f|$, in which $u \in \ell^{\infty}(\G)$ is the
unitary of `pointwise rotation'.  Also, $D$ is hereditary in the sense that if $0 \leq g \leq f$ and $f \in D$, then $g \in D$.  To see this, define $h \in \ell^\infty (\G)$ to be zero wherever $f$ is, and $h(s) = \frac{g(s)}{f(s)}$ otherwise; evidently $g = hf \in D$. 

Now, fix two functions $f$, $g \in C(X)$ and two group elements $s$,
$t \in \G$.  Let $v$ denote the canonical image of $f s$ in the GNS
Hilbert space; similarly let $w$ denote the image of $gt$.  Since the
linear span of such elements is dense, it suffices to show 
$\pi_{v,w} \in D$, where $\pi$ denotes the GNS representation. By the
first paragraph, and our assumptions on $D$ and $H$, it suffices to
show $|\pi_{v,w} |$ is bounded above by a constant times some
translate of $H$.  This, however, is a straightforward calculation.
\end{proof} 

With the previous lemma in hand, the proof of the following result is
very similar to its analog in the group case,
Theorem~\ref{thm:grpcase} -- use Schur multipliers to approximate
arbitrary states by vector states.  Whereas Theorem~\ref{thm:grpcase}
was an `if-and-only-if' statement, we do not know if the converse
holds.

\begin{thm}  
\label{thm:dynamcase}

Let $D \triangleleft \ell^{\infty}(\G)$ be a translation
invariant ideal.  Assume there exist positive definite functions 
$h_n \colon X \times \G \to \C$ satisfying $h_n \to 1$ uniformly on
compact sets and for which each $H_n(s) := \|h_n (s)\|$ belongs to
$D$.  Then $C^*(X \rtimes \G) = C^*_D(X \rtimes \G)$. \qed
\end{thm}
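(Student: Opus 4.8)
The plan is to mimic the proof of Theorem~\ref{thm:grpcase}, with Lemma~\ref{lem:pdgns} in the role played there by Lemma~\ref{lem:GNSinvariance}. Since every $D$-representation of $C_c(X\rtimes\G)$ is in particular a $*$-representation, we have $\|\cdot\|_D \le \|\cdot\|_{\mathrm{full}}$, so the canonical map $C^*(X\rtimes\G) \to C^*_D(X\rtimes\G)$ is a surjection and it suffices to show that its kernel $J$ is trivial. For this it is enough to prove that the states on $C^*(X\rtimes\G)$ arising as vector states of $D$-representations are weak-$*$ dense in the full state space: every such state annihilates $J$, the set of states annihilating the (closed) ideal $J$ is weak-$*$ closed, and so density would force every state to annihilate $J$, whence $J=0$.

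So fix a state $\p$ on $C^*(X\rtimes\G)$ and, for each $n$, set $\p_n := \p\circ\textrm{m}_{h_n}$, where $\textrm{m}_{h_n}$ is the completely positive Schur multiplier associated to $h_n$. Then $\p_n$ is a positive functional with $\|\p_n\| = \p(\textrm{m}_{h_n}(1)) = \p(h_n(e))$, where $e$ denotes the identity of $\G$ and $h_n(e)\in C(X)$ is the function $x\mapsto h_n(x,e)$. Because $h_n\to 1$ uniformly on the compact set $X\times\{e\}$, we get $\|\p_n\|\to 1$; in particular $\|\p_n\|>0$ for $n$ large, and $\psi_n := \p_n/\|\p_n\|$ is a genuine state. (Alternatively one may renormalize the $h_n$ at the outset so that the multipliers are unital, a modification preserving all the hypotheses.) By Lemma~\ref{lem:pdgns}, applicable because $H_n(s)=\|h_n(s)\|$ lies in $D$, the GNS representation $\pi_n$ associated to $\p_n$ -- which is the same as that associated to $\psi_n$ -- is a $D$-representation, and $\psi_n$ is exactly the vector state of $\pi_n$ at its normalized cyclic vector.

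It remains to check that $\psi_n\to\p$ in the weak-$*$ topology. On a finite sum $\sum f_s s\in C_c(X\rtimes\G)$ one has $\textrm{m}_{h_n}(\sum f_s s) = \sum f_s\,h_n(s)\,s$, and this converges to $\sum f_s s$ in the norm of $C^*(X\rtimes\G)$ since the sum is finite and $\|f_s h_n(s)-f_s\|_{C(X)} = \|f_s(h_n(s)-1)\|_{C(X)}\to 0$ by uniform convergence on compact sets; hence $\p_n\to\p$ pointwise on the dense subalgebra $C_c(X\rtimes\G)$, and uniform boundedness of the $\p_n$ upgrades this to weak-$*$ convergence on all of $C^*(X\rtimes\G)$. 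Since moreover $\|\p_n\|\to 1$, the normalized functionals $\psi_n$ also converge weak-$*$ to $\p$. This establishes the desired density and therefore the theorem. The genuine content of the argument is entirely in Lemma~\ref{lem:pdgns} (the Schur-multiplier estimate together with the hereditariness and polar-decomposition stability of $D$); granting it, the only point needing a little care is that the Schur multipliers need not be unital on the nose, which is absorbed by the normalizing constants $\|\p_n\|$, and everything else is a mechanical transcription of the proof of Theorem~\ref{thm:grpcase}.
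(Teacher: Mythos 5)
Your argument is correct and is exactly the route the paper intends: the paper only sketches this proof by saying to repeat the proof of Theorem~\ref{thm:grpcase} with Lemma~\ref{lem:pdgns} in place of Lemma~\ref{lem:GNSinvariance}, approximating an arbitrary state by vector states of $D$-representations via the Schur multipliers $\mathrm{m}_{h_n}$, which is precisely what you carry out. Your extra care about the multipliers not being unital (normalizing by $\|\p_n\|=\p(h_n(e))\to 1$) is a welcome detail that the paper glosses over.
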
 


\begin{defn}
\label{defn:a-T-menable}  
An action of $\G$ on $X$ is \emph{amenable} if there exist positive
definite functions $h_n \in C_c(X \rtimes \G)$ such that $h_n \to 1$
uniformly on compact sets; it is \emph{a-T-menable} if there exist
positive definite functions $h_n \in C_0(X \rtimes \G)$ such that 
$h_n \to 1$ uniformly on compact sets.
\end{defn} 

\begin{rem}\label{rem:a-T-action}
Just as every action of an amenable group is amenable, every action of
an a-T-menable group is a-T-menable.  Indeed, if $h \in c_0(\G)$ is
positive definite, then a computation confirms that the function  
$\tilde{h} \in C_0(X \times \G)$ defined by
$\tilde{h}(x,s) = h(s)$ is as well.  The assertion now follows easily
from the definitions.   
\end{rem} 

\begin{defn}  
  Let $C^*_{c_c}(X\rtimes \G)$ denote the ideal completion associated to
  the ideal of finitely supported functions on $\G$; let
  $C^*_{c_0}(X\rtimes \G)$ denote the ideal completion associated to the
  ideal of functions vanishing at infinity.
\end{defn} 

We draw several corollaries, the analogs of of
Corollaries~\ref{cor:grp} and \ref{cor:c_0} in the group case.  Again,
whereas Corollary~\ref{cor:grp} was an equivalence, the converse of
the first corollary is open.

\begin{cor} 
\label{cor:action} 
Let $\G$ be a discrete group, acting on $X$.  If the action is
amenable then $C^*(X \rtimes \G) = C^*_{c_c}(X \rtimes\G)$; if the
action is a-T-menable, then 
$C^*(X \rtimes \G) = C^*_{c_0}(X \rtimes \G)$.  \qed
\end{cor}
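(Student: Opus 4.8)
The plan is to obtain both statements directly from Theorem~\ref{thm:dynamcase} by feeding it the appropriate translation invariant ideal: $D = c_c(\G)$ for the first assertion, $D = c_0(\G)$ for the second. In each case the ideal is translation invariant, so the only thing to check is the hypothesis of Theorem~\ref{thm:dynamcase} concerning the functions $H_n(s) := \|h_n(s)\|$, where the $h_n$ are the positive definite functions furnished by amenability (resp.\ a-T-menability) of the action in Definition~\ref{defn:a-T-menable}; once $H_n \in D$ is verified, the theorem yields $C^*(X\rtimes\G) = C^*_D(X\rtimes\G)$ with nothing further to do.

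For the amenable action, Definition~\ref{defn:a-T-menable} gives $h_n\in C_c(X\rtimes\G)$ with $h_n\to 1$ uniformly on compact sets; writing $h_n = \sum_s f^{(n)}_s s$ with only finitely many $f^{(n)}_s\in C(X)$ nonzero, we get that $H_n(s) = \|f^{(n)}_s\|_{C(X)}$ vanishes off a finite subset of $\G$, i.e.\ $H_n\in c_c(\G)$. For the a-T-menable action we instead have $h_n\in C_0(X\rtimes\G)$, a continuous function on the locally compact space $X\times\G$ vanishing at infinity, again with $h_n\to 1$ uniformly on compact sets. Since $X$ is compact and $\G$ is discrete, for each $\e>0$ the compact set $\{(x,s)\in X\times\G : |h_n(x,s)|\geq\e\}$ can meet only finitely many of the clopen slices $X\times\{s\}$; hence $\{s\in\G : H_n(s)\geq\e\}$ is finite for every $\e>0$, that is, $H_n\in c_0(\G)$. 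In both cases Theorem~\ref{thm:dynamcase} applies and gives the claimed equality.

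I do not anticipate any real obstacle: all the substance is already packaged in Theorem~\ref{thm:dynamcase} (and, beneath it, Lemma~\ref{lem:pdgns}). The only step that is not completely mechanical is the elementary observation in the a-T-menable case that a $C_0$-function on $X\times\G$, with $X$ compact and $\G$ discrete, restricts to a $c_0$-function in the $\G$-variable after taking the sup-norm over $X$ — and this is immediate from unwinding the definition of vanishing at infinity on $X\times\G$.
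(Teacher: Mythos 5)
Your proposal is correct and is exactly the argument the paper intends: the corollary is stated with an immediate \qed because it follows from Theorem~\ref{thm:dynamcase} applied to $D=c_c(\G)$ and $D=c_0(\G)$, once one notes (as you do) that compactness of $X$ makes $H_n(s)=\|h_n(s)\|$ land in $c_c(\G)$ or $c_0(\G)$ respectively. Your spelled-out verification of $H_n\in D$ is the only non-trivial content, and it is right.
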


\begin{cor}
\label{cor:weakcont} 
Let $\G$ be a discrete group, acting on $X$.  If the action is
amenable then every covariant representation \textup{(}that is,
$*$-homomorphism $C^*(X\rtimes \G) \to \B(\hh)$\textup{)} is weakly 
contained in a $c_c$-representation; if the action is a-T-menable
then every covariant representation is weakly contained in a
$c_0$-representation.
\end{cor}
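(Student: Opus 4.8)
The plan is to reduce Corollary~\ref{cor:weakcont} to Corollary~\ref{cor:action} together with the elementary fact that $C^*(X\rtimes\G)$ admits a faithful $c_0$-representation (respectively $c_c$-representation). I will treat the a-T-menable case; the amenable case is identical with $c_0$ replaced by $c_c$. So suppose the action of $\G$ on $X$ is a-T-menable, and let $\sigma\colon C^*(X\rtimes\G)\to\B(\hh)$ be an arbitrary covariant representation. By Corollary~\ref{cor:action} the canonical surjection $C^*(X\rtimes\G)\to C^*_{c_0}(X\rtimes\G)$ is an isomorphism. Hence $\sigma$ factors (as the identity on $C_c(X\rtimes\G)$) through $C^*_{c_0}(X\rtimes\G)$; in other words $\|\sigma(a)\|\leq\|a\|_{c_0}$ for every $a\in C_c(X\rtimes\G)$, so $\sigma$ is automatically continuous for the $c_0$-norm.

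First I would invoke the remark preceding the corollary: $C^*_{c_0}(X\rtimes\G)$ admits a faithful $c_0$-representation $\rho\colon C^*_{c_0}(X\rtimes\G)\to\B(\hk)$, obtained by taking a direct sum of $c_0$-representations separating points (direct sums of $c_0$-representations are again $c_0$-representations, exactly as in Remark~\ref{rem:sum} for the group case). Precomposing with the isomorphism $C^*(X\rtimes\G)\cong C^*_{c_0}(X\rtimes\G)$, we obtain a faithful $c_0$-representation of $C^*(X\rtimes\G)$ itself — call it $\rho$ as well. Since $\rho$ is faithful, it is isometric, so $\|\sigma(a)\|\leq\|a\|_{c_0}=\|\rho(a)\|$ for every $a\in C_c(X\rtimes\G)$, and therefore for every $a\in C^*(X\rtimes\G)$ by density. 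This norm inequality is precisely the statement that $\sigma$ is weakly contained in $\rho$, which is the desired conclusion.

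The only point requiring a word of care — and the one I expect a referee to probe — is the identification of the $c_0$-norm on $C_c(X\rtimes\G)$ with both the universal norm (via Corollary~\ref{cor:action}) and the norm coming from the faithful $c_0$-representation $\rho$; these coincide because $\rho$ extends to $C^*_{c_0}(X\rtimes\G)$ by the universal property of the ideal completion and is isometric there by faithfulness. Everything else is the formal chain ``$\sigma$ continuous for $\|\cdot\|_{c_0}$, $\rho$ isometric for $\|\cdot\|_{c_0}$, hence $\|\sigma(\cdot)\|\leq\|\rho(\cdot)\|$,'' which is the definition of weak containment. No genuine obstacle arises: the corollary is a soft consequence of the structural results already in place.
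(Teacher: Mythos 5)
Your argument is correct and is essentially the paper's own proof: the paper likewise combines Corollary~\ref{cor:action} (the identification $C^*(X\rtimes\G)=C^*_{c_0}(X\rtimes\G)$, resp.\ $C^*_{c_c}(X\rtimes\G)$) with the fact that the ideal completion admits a faithful $c_0$- (resp.\ $c_c$-) representation, so that every covariant representation is norm-dominated by, hence weakly contained in, that faithful representation. You have merely spelled out the weak-containment bookkeeping that the paper leaves implicit.
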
 

\begin{proof} 
In the case of an amenable action, the hypothesis implies that 
$C^*(X \rtimes \G) = C^*_{c_c}(X \rtimes \G)$, and
$C^*_{c_c}(X \rtimes \G)$ has a faithful $c_c$-representation.  For
an a-T-menable action systematically replace $c_c(\G)$ by $c_0(\G)$
throughout. 
\end{proof} 

\begin{cor} 
  Let $\G$ be a discrete group acting on $X$.  Let  
  $\pi \colon C^*(X\rtimes \G) \to \B(\hh)$ be a covariant
  representation for which $\pi|_\G$ weakly contains the trivial
  representation of $\G$.  We have:
\begin{enumerate} 
\item the action is amenable if and only if $\G$ is amenable; 
\item the action is a-T-menable if and only if $\G$ is a-T-menable.  
\end{enumerate} 
\end{cor}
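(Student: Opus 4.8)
The plan is to settle the two ``if'' directions by citing results already established, and to obtain both ``only if'' directions by a single argument that reduces the question to the group case through weak containment. Neither ``if'' direction uses the representation $\pi$: every action of an amenable group is amenable (recalled in the introduction), and every action of an a-T-menable group is a-T-menable (Remark~\ref{rem:a-T-action}).

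For the forward implication of (1), assume the action is amenable. By Corollary~\ref{cor:weakcont} the given covariant representation $\pi$ is weakly contained in some $c_c$-representation $\sigma$ of $C^*(X\rtimes\G)$, i.e.\ $\ker\sigma\subseteq\ker\pi$; in particular $\|\pi(x)\|\leq\|\sigma(x)\|$ for every $x$ in (the image of) $\C[\G]$, so $\pi|_\G$ is weakly contained in $\sigma|_\G$ as unitary representations of $\G$. By definition $\sigma|_\G$ is a $c_c$-representation of $\G$; it is a genuine unitary representation since $X$ is compact, so the group elements are honest unitaries of $C^*(X\rtimes\G)$. As $C^*_{c_c}(\G)=C^*_r(\G)$, the representation $\sigma|_\G$ factors through $C^*_r(\G)$ and is therefore weakly contained in the left regular representation. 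Chaining the two weak containments, $\pi|_\G$ is weakly contained in the left regular representation; since $\pi|_\G$ also weakly contains the trivial representation, so does the left regular representation, and hence $\G$ is amenable.

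The forward implication of (2) is verbatim the same with $c_0$ in place of $c_c$: if the action is a-T-menable, Corollary~\ref{cor:weakcont} furnishes a $c_0$-representation $\sigma$ weakly containing $\pi$, and restriction to $\G$ yields a $c_0$-representation $\sigma|_\G$ of $\G$ which, like $\pi|_\G$, weakly contains the trivial representation. Thus $\G$ carries a $c_0$-representation weakly containing the trivial representation, which is one of the standard characterizations of a-T-menability \cite{CCJJV}. If one prefers to avoid quoting that fact, extract approximately invariant unit vectors from $\sigma|_\G$ and approximate them within the dense subspace furnishing the $c_0$-matrix coefficients (the maneuver used in the proof of Proposition~\ref{prop:amenable}) to produce positive definite functions in $c_0(\G)$ tending pointwise to $1$, then invoke Corollary~\ref{cor:grp}.

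I do not expect a genuine obstacle: the corollary is a short consequence of Corollary~\ref{cor:weakcont} together with the group-case results. The only points deserving care are the (elementary) passage of weak containment to the restriction to $\G$, and, throughout, keeping straight the direction of weak containment so that ``$\pi|_\G$ weakly contains the trivial representation'' really is inherited by the dominating representation $\sigma|_\G$.
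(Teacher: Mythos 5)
Your proof is correct and follows essentially the same route as the paper: both reduce to the group case by showing that $\pi|_\G$ is weakly contained in a $c_c$- (resp.\ $c_0$-) representation of $\G$ and then invoking the standard characterizations of amenability and a-T-menability via weak containment of the trivial representation. The only cosmetic difference is that you pass through Corollary~\ref{cor:weakcont} and restrict the weak containment to $\G$, whereas the paper uses Corollary~\ref{cor:action} directly to form the composition $C^*_D(\G)\to C^*_D(X\rtimes\G)=C^*(X\rtimes\G)\to\B(\hh)$; these amount to the same thing.
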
 

\begin{proof}  
The `if' statements are trivial (see Remark~\ref{rem:a-T-action}).
For the `only if' statements, let $D$ stand for the appropriate ideal,
either $c_c(\G)$ or $c_0(\G)$.  Corollary~\ref{cor:action} implies
that $C^*_D(X\rtimes \G) = C^*(X\rtimes \G)$, so that we can form the
composition
\begin{equation*}
C^*_D(\G) \to C^*_D(X\rtimes \G) = C^*(X\rtimes \G) \to \B(\hh).
\end{equation*}
The proof is completed by 
recalling that $\G$ is amenable (respectively,
  a-T-menable) if and only if there is a $c_c(\G)$ (respectively,
  $c_0(\G)$)-representation of $\G$ weakly containing the trivial
  representation. 
\end{proof}  

To close, we return to the result of Douglas and Nowak described in
the introduction.  Suppose a
discrete group $\G$ acts on $X$, and that $\mu$ is a quasi-invariant
measure on $X$ (in other words, elements of $\G$ map $\mu$-null sets
to $\mu$-null sets).  For each element $s\in\G$ the Radon-Nikodym
derivative is the non-negative, measurable function $\rho_s$
satisfying 
\begin{equation*}
  \int_X f d\mu = \int_X s.f \rho_s d\mu,
\end{equation*}
for every measurable function $f$; here, $f \mapsto s.f$ denotes the
action of $s$ on $f$.   The $\rho_s$ allow one to
construct a covariant representation of $X\rtimes\G$ on the Hilbert
space $L^2(X,\mu)$:  functions in $C(X)$ act by multiplication
and elements $s\in\G$ act by the unitaries 
\begin{equation*}
  U_s(f) := (s.f) \rho^{1/2}.
\end{equation*}

\begin{cor} 
\label{cor:DN} 
Let $\G$ be a discrete group acting on $X$, with quasi-invariant
measure $\mu$.  Suppose the representation of\/ $\G$ on $L^2(X,\mu)$
weakly contains the trivial representation.   We have:
\begin{enumerate}
\item the action is amenable if and only if\/ $\G$ is amenable;
\item the action is a-T-menable if and only if\/ $\G$ is a-T-menable.
\end{enumerate}
In particular, such an action of a non-a-T-menable group can never be
a-T-menable.   \qed
\end{cor}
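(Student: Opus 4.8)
The plan is to deduce this corollary directly from the previous one by recognizing the operators $U_s$, together with the multiplication action of $C(X)$, as a bona fide covariant representation of $X\rtimes\G$ on $L^2(X,\mu)$. Once that is in place, the hypothesis about $L^2(X,\mu)$ is exactly the hypothesis of the previous corollary, and there is nothing left to do.

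First I would check that $s\mapsto U_s$ is a unitary representation of $\G$. Writing $M_g$ for multiplication by $g\in C(X)$, unitarity of $U_s$ amounts to $\int_X |s.f|^2\rho_s\,d\mu = \int_X |f|^2\,d\mu$, which is just the defining property of $\rho_s$ applied to the function $|f|^2$. Multiplicativity $U_sU_t=U_{st}$ is the Radon--Nikodym cocycle identity (that $(t^{-1}.\rho_s)\,\rho_t=\rho_{st}$, suitably interpreted for the action convention in force), which collapses the product of the half-density factors. Neither point is more than a line of computation.

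Next I would verify the covariance relation $U_sM_gU_s^* = M_{s.g}$ for $g\in C(X)$. Unwinding definitions, $U_sM_gU_s^*f$ equals $\bigl(s.(g\cdot(s^{-1}.f)\,\rho_{s^{-1}}^{1/2})\bigr)\rho_s^{1/2} = (s.g)\,f\cdot (s.\rho_{s^{-1}})^{1/2}\rho_s^{1/2}$, and the last factor is $1$ by the cocycle identity. Thus $(M,U)$ is a covariant representation of the pair $(C(X),\G)$, so by the universal property of the full crossed product it integrates to a $*$-representation $\pi\colon C^*(X\rtimes\G)\to\B(L^2(X,\mu))$ with $\pi|_\G = U$. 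The standing assumption says precisely that $\pi|_\G$ weakly contains the trivial representation of $\G$, so the previous corollary applies to this $\pi$ and gives statements (1) and (2). The concluding ``in particular'' is then immediate: were the action a-T-menable, part (2) would force $\G$ to be a-T-menable.

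I do not expect a genuine obstacle here; the proof is bookkeeping plus one appeal to the preceding corollary. The only delicate point is matching the Radon--Nikodym cocycle identity to the (left) action convention so that the $\rho$-factors cancel correctly in the unitarity, multiplicativity, and covariance checks — once that convention is pinned down, each verification is a two-line calculation.
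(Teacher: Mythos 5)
Your proposal is correct and follows exactly the paper's route: the paper constructs the same covariant representation $(M,U)$ on $L^2(X,\mu)$ just before the corollary and then states that the result ``follows immediately from the previous corollary.'' You simply fill in the routine verifications (unitarity, multiplicativity, covariance) that the paper leaves implicit.
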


This result, which follows immediately from the previous corollary,
generalizes Theorem~3 and Corollary~4 of Douglas and Nowak 
\cite{DN} -- the hypotheses of their results imply the existence of a
non-zero fixed vector in $L^2(X,\mu)$, namely the square root of
$\overline{\rho}$ or $\underline{\rho}$, as appropriate. See
\cite[Lemma 6]{DN}.

For invariant probability measures we get an analogue of a well-known
amenability result.  

\begin{cor} 
Let $\G$ be a discrete group acting on $X$, with invariant probability measure 
  $\mu$. The action is a-T-menable if and only if\/ $\G$ is
  a-T-menable. In particular, no measure-preserving action of a
  non-a-T-menable group can be a-T-menable.  
\end{cor}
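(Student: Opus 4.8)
The plan is to read this off from Corollary~\ref{cor:DN} as the special case of an invariant probability measure. The first step is to observe that an invariant probability measure $\mu$ is automatically quasi-invariant: a measure-preserving transformation certainly carries $\mu$-null sets to $\mu$-null sets. So Corollary~\ref{cor:DN} is in principle applicable, and it remains only to check its standing hypothesis on the representation on $L^2(X,\mu)$. Here I would use that, when $\mu$ is genuinely invariant, the Radon--Nikodym derivatives satisfy $\rho_s \equiv 1$ for every $s\in\G$; hence the covariant representation of $X\rtimes\G$ on $L^2(X,\mu)$ constructed just before Corollary~\ref{cor:DN} has $U_s(f) = s.f$, and its restriction to $\G$ is simply the Koopman representation.

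The second step is to produce the nonzero $\G$-fixed vector, which gives the weakly-contained trivial representation demanded by the corollary. Because $\mu$ is a \emph{probability} measure, the constant function $\mathbf{1}_X$ lies in $L^2(X,\mu)$, and it is manifestly fixed by each $U_s$. Thus the representation of $\G$ on $L^2(X,\mu)$ in fact \emph{contains} the trivial representation as a subrepresentation, in particular weakly contains it, so the hypothesis of Corollary~\ref{cor:DN} is met.

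Part (2) of Corollary~\ref{cor:DN} then yields precisely the asserted equivalence, and the closing sentence is merely its contrapositive. I do not anticipate any genuine obstacle in this argument; the only point that deserves comment --- and the only place finiteness of $\mu$ enters --- is the square-integrability of the constant functions, which is what supplies the invariant vector.
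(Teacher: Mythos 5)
Your proof is correct and is essentially identical to the paper's, which consists of the single remark that the constant functions in $L^2(X,\mu)$ are invariant for $\G$ (thereby meeting the hypothesis of Corollary~\ref{cor:DN}). Your additional observations --- that invariance implies quasi-invariance, that $\rho_s\equiv 1$, and that finiteness of $\mu$ is what puts $\mathbf{1}_X$ in $L^2$ --- simply spell out what the paper leaves implicit.
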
 

\begin{proof} 
The constant functions in $L^2(X,\mu)$ are invariant for $\G$.
\end{proof}


\end{document}